\theoremstyle{plain}
\newtheorem{theorem}{Theorem}[section]
\newtheorem{proposition}[theorem]{Proposition}
\newtheorem{lemma}[theorem]{Lemma}
\newtheorem{conjecture}[theorem]{Conjecture}
\newtheorem{example}[theorem]{Example}
\theoremstyle{definition}
\newtheorem{definition}[theorem]{Definition}
\newcommand{\cc}{\mathbb{C}}
\newcommand{\nn}{\mathbb{N}}
\newcommand{\rr}{\mathbb{R}}
\newcommand{\zz}{\mathbb{Z}}
\newcommand{\even}{\operatorname{even}}
\newcommand{\odd}{\operatorname{odd}}
\newcommand{\colim}{\operatorname{colim}}
\newcommand{\im}{\operatorname{im}}
\newcommand{\tr}{\operatorname{tr}}
\newcommand{\id}{\operatorname{id}}
\begin{document}

\title{$L^2$-Invariants of Finite Aspherical CW-Complexes}

\author{Christian Wegner}

\subjclass[2000]{Primary: 57Q10, Secondary: 55N99}

\keywords{$L^2$-invariants, $L^2$-Betti number, Novikov-Shubin invariant, $L^2$-torsion, aspherical space}

\address{Mathematisches Institut \\ Universit\"at M\"unster \\
Einsteinstra{\ss}e 62 \\ M\"unster, D-48149 \\ Germany}
\email{c.wegner@uni-muenster.de}

\maketitle

\begin{abstract}
Let $X$ be a finite aspherical CW-complex whose fundamental group $\pi_1(X)$ possesses a subnormal series $\pi_1(X) \rhd G_m \rhd \ldots \rhd G_0$ with a non-trivial elementary amenable group $G_0$. We investigate the $L^2$-invariants of the universal covering of such a CW-complex $X$. We show that the Novikov-Shubin invariants $\alpha_n({\tilde X})$ are positive. We further prove that the $L^2$-torsion $\rho^{(2)}({\tilde X})$ vanishes if $\pi_1(X)$ has semi-integral determinant.
\end{abstract}

\section{Introduction}
\label{intro}

The present paper investigates $L^2$-invariants, like $L^2$-Betti numbers, Novikov-Shubin invariants and $L^2$-torsion, of the universal covering of finite aspherical CW-complexes. The main result of this paper is related to the following conjecture posed by L\"uck (see \cite[Conjecture 9.21]{Luc02}). In case of $L^2$-Betti numbers this conjecture goes back to a question of Gromov (see \cite[Section 8A]{Gro93}).

\begin{conjecture}
Let $M$ be a closed aspherical orientable manifold with vanishing simplicial volume. Then the $L^2$-Betti numbers $b^{(2)}_n({\tilde M})$ vanish, the Novikov-Shubin invariants $\alpha_n({\tilde M})$ are positive and the $L^2$-torsion $\rho^{(2)}({\tilde M})$ vanishes.
\end{conjecture}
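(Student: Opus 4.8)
The plan is to reduce the conjecture to the main theorem of this paper by extracting, from the hypothesis $\|M\| = 0$, the amenable algebraic structure on $\pi := \pi_1(M)$ that those results require. Since $M$ is closed and aspherical it is, after a smooth triangulation, a finite aspherical CW-complex with $M \simeq B\pi$ and $\tilde M = E\pi$; in particular $\pi$ is a torsion-free Poincar\'e duality group and all three invariants in the statement are invariants of $\pi$. The program therefore splits into two parts: first, a geometric-to-algebraic step producing a non-trivial \emph{normal elementary amenable subgroup} $A$ of $\pi$ (equivalently, a non-trivial elementary amenable radical) out of the vanishing of the simplicial volume; and second, a formal step feeding $A$ into the main theorem, for the Novikov--Shubin invariants and the $L^2$-torsion, and into the Cheeger--Gromov vanishing theorem, for the $L^2$-Betti numbers.

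Granting such an $A$, the second part is routine. Because $\pi$ is torsion-free, $A$ is infinite, so the theorem of Cheeger--Gromov that a finite-type space whose fundamental group contains an infinite amenable normal subgroup has vanishing $L^2$-Betti numbers gives $b^{(2)}_n(\tilde M) = 0$ for all $n$. For the remaining two invariants the length-zero subnormal series $\pi \rhd A$ with $G_0 = A$ already satisfies the hypothesis of the main theorem, which then yields $\alpha_n(\tilde M) > 0$ and, once the semi-integral determinant hypothesis is verified for $\pi$ (it holds for all groups satisfying the Determinant Conjecture, a very large class), also $\rho^{(2)}(\tilde M) = 0$. If the geometric step produces only a non-trivial amenable normal subgroup that is not elementary amenable, the $L^2$-Betti numbers still vanish by Cheeger--Gromov, and one reduces the other two invariants by passing to a non-trivial elementary amenable subnormal subgroup contained in it.

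The crux is the first part, and it is where essentially all the difficulty lies. By Gromov's theorem that the bounded cohomology and the $\ell^1$-semi-norm of an aspherical space depend only on its fundamental group, the hypothesis $\|M\|=0$ is equivalent to the vanishing of the $\ell^1$-semi-norm of the fundamental class $[M] \in H_n(\pi;\rr)$. I would attack this by the contrapositive: assuming $\pi$ has trivial amenable radical, I would try to manufacture a bounded $n$-cocycle whose pairing with $[M]$ is non-zero, which by the duality $\|[M]\|_1 = \sup\{\langle \beta, [M]\rangle : \beta \in H^n_b(\pi;\rr),\ \|\beta\|_\infty \le 1\}$ certifies $\|M\| > 0$. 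The input would be the rich bounded cohomology forced by non-amenable behaviour (as for acylindrically hyperbolic groups, where $H^2_b$ is infinite-dimensional), pushed up to the top degree $n$ through the Poincar\'e duality structure of $\pi$.

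I expect this manufacture of a non-trivial top-degree bounded class to be the genuine obstacle: at present no mechanism carries non-amenability all the way to a bounded fundamental cocycle, and the implication ``trivial amenable radical $\Rightarrow \|M\| > 0$'' is not known. Worse, should there exist boundedly acyclic Poincar\'e duality groups with trivial amenable radical, then $\|M\| = 0$ would hold with no amenable normal subgroup available, and the first part of the plan would have to be replaced by a purely analytic argument --- ideally Gromov's conjectured inequality $b^{(2)}_n(\tilde M) \le c_n \|M\|$ together with analogous bounds certifying positivity of $\alpha_n(\tilde M)$ and vanishing of $\rho^{(2)}(\tilde M)$ directly from the $\ell^1$-semi-norm. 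I would therefore first test the whole program on the cases where the amenable structure is visible --- mapping tori, manifolds fibering over positive-dimensional aspherical bases, and non-trivial products --- in order to isolate the structural mechanism before confronting the general case.
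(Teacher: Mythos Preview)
The statement you are attempting to prove is not a theorem in the paper but an open \emph{conjecture} (Conjecture~1.1, attributed to L\"uck and, for the $L^2$-Betti part, to Gromov). The paper does not prove it; its contribution is Theorem~\ref{maintheo}, which establishes the three conclusions under the \emph{additional} hypothesis that $\pi_1(X)$ admits a subnormal series ending in a non-trivial elementary amenable group, together with the semi-integral determinant hypothesis for the torsion part. So there is no ``paper's own proof'' to compare against.

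Your plan correctly recognises that the second half --- feeding a non-trivial elementary amenable normal subgroup into Theorem~\ref{maintheo} and Cheeger--Gromov --- is routine. The genuine gap is exactly the one you name: there is no known mechanism that produces a non-trivial (elementary) amenable normal subgroup of $\pi_1(M)$ from $\|M\|=0$. Your contrapositive strategy (trivial amenable radical $\Rightarrow$ $\|M\|>0$) is an open problem in its own right, and your candidate route via bounded cohomology runs into the difficulty you yourself flag: non-amenability gives plentiful $H^2_b$, but there is no general push-up to a non-trivial bounded class in the top degree that pairs non-trivially with $[M]$. Boundedly acyclic Poincar\'e duality groups with trivial amenable radical would kill this line entirely, as you note.

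There is also a second, smaller gap you partially acknowledge: even granting the amenable normal subgroup, Theorem~\ref{maintheo} needs the semi-integral determinant property for $\pi_1(M)$ to conclude $\rho^{(2)}(\tilde M)=0$, and the conjecture makes no such assumption. So your reduction would prove the $L^2$-torsion part only conditionally on the Determinant Conjecture. In short, your proposal is an honest research outline rather than a proof, and the obstacles you identify are precisely why the statement remains a conjecture.
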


Since any closed orientable manifold with non-trivial amenable fundamental group has vanishing simplicial volume (see \cite[Corollary (C) on page 40]{Gro82}), it is obvious to consider finite aspherical CW-complexes with amenable or elementary amenable fundamental group. Recall that the class of elementary amenable groups is the smallest class of groups containing all finite groups and abelian groups, being closed under extensions and under upwards directed unions. Any elementary amenable group is amenable.

In this paper we prove the following theorem.

\begin{theorem} \label{maintheo}
Let $X$ be a finite aspherical CW-complex. Suppose that the fundamental group $\pi_1(X)$ possesses a subnormal series
\[
\pi_1(X) \rhd G_m \rhd \ldots \rhd G_0
\]
with a non-trivial elementary amenable group $G_0$. Then the following statements hold.
\begin{enumerate}
\item $b_n^{(2)}({\tilde X})=0$ for all $n \geq 0$. \label{maintheo1}
\item $\alpha_n({\tilde X})>0$ for all $n \geq 1$. \label{maintheo2}
\item If $\pi_1(X)$ has semi-integral determinant then $\rho^{(2)}({\tilde X})=0$. \label{maintheo3}
\end{enumerate}
\end{theorem}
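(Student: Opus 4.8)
The plan is to turn the statement into one about the group $\pi := \pi_1(X)$ alone. Since $X$ is finite and aspherical, $\tilde{X}$ is contractible, the cellular chain complex $C_* := C_*(\tilde{X})$ is a finite free $\zz\pi$-resolution of the trivial module $\zz$, and $L^2$-homotopy invariance makes all three quantities invariants of $\pi$; concretely they are read off from the spectral density functions $F_n(\lambda)=\dim_{\mathcal{N}(\pi)}(\im E_\lambda^{\Delta_n})$ of the combinatorial Laplacians $\Delta_n$ of $C_*^{(2)}=\ell^2(\pi)\otimes_{\zz\pi}C_*$, namely $b_n^{(2)}=F_n(0)$, $\alpha_n$ as the polynomial decay rate of $F_n(\lambda)-F_n(0)$ at $\lambda\downarrow 0$, and $\rho^{(2)}$ as the alternating sum of $\log\det_{\mathcal{N}(\pi)}\Delta_n$ once the complex is det-$L^2$-acyclic. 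Because a finite aspherical complex has finite cohomological dimension, $\pi$ is torsion-free, hence the non-trivial $G_0$ is infinite. Statement (1) is then the identity $b_n^{(2)}(\tilde{X})=b_n^{(2)}(\pi)$ together with the known vanishing of the $L^2$-Betti numbers of a group possessing an infinite amenable normal subgroup (Cheeger--Gromov, L\"uck), extended to the subnormal situation; in particular $\chi(X)=\sum_n(-1)^n b_n^{(2)}(\tilde{X})=0$ and $C_*^{(2)}$ is $L^2$-acyclic, so $\rho^{(2)}(\tilde{X})$ is defined as soon as the determinant-class condition holds.

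For (2) and (3) I would argue by induction on the length $m$ of the subnormal series, the base case $m=0$ being an infinite elementary amenable normal subgroup $N:=G_0\lhd\pi$. The obstacle that the intermediate groups $G_i$ need not admit finite classifying spaces I would circumvent by running the whole argument inside the extended dimension theory over von Neumann algebras (in the sense of L\"uck--Reich--Schick), where the relevant functions are defined for arbitrary modules: positivity of $\alpha_n$ becomes cofinal-measurability of certain $\mathrm{Tor}$-modules over $\mathcal{N}(\pi)$, and this is stable under the operations dictated by the series. The inductive step passes the conclusion from $G_i$ to $G_{i+1}$ using the crossed-product decomposition of $\cc G_{i+1}$ over $\cc G_i$ along $G_{i+1}/G_i$, and finally from $G_m$ up to $\pi$ through $1\to G_m\to\pi\to\pi/G_m\to 1$, the point being that the spectral-density and determinant control established over $\cc G_i$ survives adjoining the remaining directions.

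The base case I would prove by a transfinite induction over the elementary amenable hierarchy of $N$ exactly as it is described in the introduction: I verify the near-zero spectral control and the determinant-class property, with trivial alternating determinant product, for $N$ finite and $N$ abelian, and then show both are inherited under extensions and under upward directed unions. The decisive step is the abelian one: via $\mathcal{N}(\zz)\cong L^\infty(S^1)$ and the $L^\infty$-functional calculus the Laplacian in the $N$-direction becomes a multiplication operator whose spectral density function vanishes at worst polynomially at $0$ (forcing $\alpha_n>0$) and whose Fuglede--Kadison determinant is governed by a Mahler measure; the hypothesis that $\pi$ has semi-integral determinant is what keeps the complex of determinant class at every stage, so that the limiting arguments in the directed-union and extension steps converge and the alternating product of determinants collapses to $1$, yielding $\rho^{(2)}(\tilde{X})=0$.

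The main obstacle, as I see it, is the upgrade from the dimension content to the spectral and determinant content. Vanishing of $b_n^{(2)}$ only constrains the kernel of $\Delta_n$, whereas $\alpha_n>0$ and $\rho^{(2)}=0$ require uniform control of $F_n$ on a whole neighbourhood of $0$ and of the determinants $\det_{\mathcal{N}(\pi)}\Delta_n$, and this control must be shown to survive every extension and directed-union step of the hierarchy and every stage of the subnormal series without degenerating. Preventing the determinants from approaching the degenerate regime along this transfinite process --- precisely the role of the semi-integral determinant hypothesis --- together with the loss of finiteness for the intermediate groups, is where the genuine difficulty lies; statement (1) is by comparison classical.
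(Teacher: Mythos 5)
Your route to assertion (1) --- homotopy invariance plus the known vanishing theorem for fundamental groups with a subnormal series ending in a non-trivial amenable group (L\"uck's Handbook article, Theorem 10.12) --- is fine, and the paper itself records exactly this alternative proof after Theorem \ref{rhomain}. The gaps are in (2) and (3). For (2), your plan is a transfinite induction over the elementary amenable hierarchy of $G_0$, passing ``near-zero spectral control'' through extensions and upward directed unions. The extension step can indeed be made to work via the L\"uck--Reich--Schick capacity estimate (for $H \lhd G$ with cofinal-measurable $H_n^H(EH;{\mathcal N}H)$ one has $c(H_n^G(EG;{\mathcal N}G)) \leq \sum_{i=0}^{n} c(H_i^H(EH;{\mathcal N}H))$, with no hypothesis at all on $G/H$), but the directed-union step breaks down: the only available bound there is $c(H_n^G(EG;{\mathcal N}G)) \leq \liminf_H c(H_n^H(EH;{\mathcal N}H))$ over a cofinal system of subgroups, and finiteness of each individual capacity does not give finiteness of the liminf --- the capacities can grow without bound along the system, in which case the Novikov--Shubin invariant of the union may be $0$. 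What closes this gap in the paper is a group-theoretic reduction your outline is missing: by Hillman--Linnell, a group of finite cohomological dimension (which every $G_i$ is, being a subgroup of $\pi_1(X)$) with a non-trivial elementary amenable normal subgroup has a non-trivial \emph{torsionfree abelian} normal subgroup $A$. Every non-trivial finitely generated subgroup of $A$ is $\zz^k$, whose capacities $1/k,\ldots,1/k,0^-,\ldots$ sum to at most $1$ \emph{uniformly in} $k$; this uniform bound makes the liminf harmless, gives $\sum_{i\leq n} c(H_i^A(EA;{\mathcal N}A)) \leq 1$, and one then climbs the subnormal series $A \lhd G_0 \lhd \cdots \lhd \pi_1(X)$ by iterating the normal-subgroup estimate finitely many times, keeping all capacities finite. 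This single reduction replaces your entire transfinite induction.

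For (3) the gap is more fundamental: your outline contains no mechanism that actually produces $\rho^{(2)}({\tilde X})=0$. The Fuglede--Kadison determinant is not continuous under the directed-union and extension limits you invoke, so ``the limiting arguments converge and the alternating product of determinants collapses to $1$'' is precisely what needs to be proved; semi-integral determinant only supplies lower bounds $\geq 1$, never the required equalities, and a Mahler-measure computation in the $\zz$-direction says nothing about the determinants of the full differentials $c_n({\tilde X})$ over $\zz\pi_1(X)$. The paper's proof uses a different device altogether: Ore localization with determinant control. One shows that the multiplicative set generated by $z-1$ (for $z$ a generator of a copy of $\zz$ inside the torsionfree abelian normal subgroup of $G_0$) has the determinant localization property, and that this property is inherited by $S_{max}(H) \subset \zz G$ whenever $H \lhd G$ (Proposition \ref{propvan}) --- this is how one climbs the subnormal series. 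The resulting set $S \subset \zz\pi_1(X)$ kills $H_*({\tilde X})$, so $S^{-1}C_*({\tilde X})$ is finite free acyclic and admits an honest chain contraction $\delta_*$; then Lemma \ref{wcc-lem2} gives $\rho^{(2)}({\tilde X}) = \ln {\textstyle\det}_{S^{-1}\cdot\zz G}((c({\tilde X})+\delta)_{\odd})$, and this vanishes because $(c({\tilde X})+\delta)_{\odd}$ is invertible over $S^{-1}\cdot\zz\pi_1(X)$ and, by multiplicativity together with semi-integral determinant, invertible matrices there have determinant exactly $1$. Without this localization step, or a substitute of comparable rigidity, your argument for (3) cannot be completed.
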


For a large class of groups it is known that the members have semi-integral determinant (see Proposition \ref{classG}). It is conjectured that all groups have semi-integral determinant.

This article is organized as follows. 
In section \ref{Basics} we give a short introduction to $L^2$-invariants. 
In section \ref{NovShuInv} we will consider Novikov-Shubin invariants. The main result of that section is Theorem \ref{theoNS}. The second part of this theorem coincides with assertion \ref{maintheo2} of Theorem \ref{maintheo}. The proof is a combination of a group theoretical result of Hillman and Linnell (see Proposition \ref{HL}) and results on Novikov-Shubin invariants of L\"uck, Reich and Schick published in the paper \cite{LRS99}.
In section \ref{L2Torsion} we will prove the other two assertions of Theorem \ref{maintheo} using localization techniques for non-commutative rings. The main results of this section are Theorem \ref{rhovan} and Theorem \ref{rhomain}.

Theorem \ref{maintheo} is a slight generalization of the main result of the author's dissertation at the university of M\"unster.

\section{Some Basics on $L^2$-Invariants}
\label{Basics}

In this section we give a short introduction to $L^2$-invariants. We define $L^2$-Betti numbers, Novikov-Shubin invariants and $L^2$-torsion of finite CW-complexes. For a more detailed survey on $L^2$-invariants we refer to the article \cite{Luc02}.

For a discrete group $G$ we define $l^2(G)$ as the Hilbert space completion of the complex group ring ${\cc}G$ with respect to the inner product
\[
\langle \sum_{g \in G} c_g g , \sum_{g \in G} d_g g \rangle := \sum_{g \in G} \overline{c_g} d_g.
\]
The group von Neumann algebra ${\mathcal N}G$ is the space of $G$-equivariant bounded operators from $l^2(G)$ to itself.

\begin{definition}
A \emph{finitely generated Hilbert ${\mathcal N}G$-module} $V$ is a Hilbert space $V$ together with a $G$-action such that there exists an isometric $G$-equivariant embedding of $V$ into $l^2(G)^n$ for some $n \in \nn$.
A \emph{morphism of Hilbert ${\mathcal N}G$-modules} is a bounded $G$-equivariant operator. A \emph{weak isomorphism} is a morphism which is injective and has dense image.
\end{definition}

\begin{definition}
Let $f: V \to V$ be a positive endomorphism of a finitely generated Hilbert ${\mathcal N}G$-module.
Choose any orthogonal $G$-equivariant projection $pr: l^2(G)^n \to l^2(G)^n$ whose image is isometrically $G$-isomorphic to $V$. Let $\overline{f}: l^2(G)^n \to l^2(G)^n$ be the positive operator given by the composition
\[
\overline{f}: l^2(G)^n {\mathop{\longrightarrow}\limits^{pr}} \im(pr) \cong V {\mathop{\longrightarrow}\limits^{f}} V \cong \im(pr) \hookrightarrow l^2(G)^n.
\]
Define the \emph{von Neumann trace} of $f$ by
\[
\tr_{{\mathcal N}G}(f) := \sum_{k=1}^n \langle \overline{f}(e_k) , e_k \rangle \in [0,\infty)
\]
where $e_k \in l^2(G)^n$ denotes the element whose components are zero except for the $k$-th entry, which is the unit in ${\cc}G \subseteq l^2(G)$.
\end{definition}

Using the von Neumann trace we can define the von Neumann dimension for Hilbert ${\mathcal N}G$-modules.

\begin{definition}
The \emph{von Neumann dimension} of a finitely generated Hilbert ${\mathcal N}G$-module $V$ is given by
\[
\dim_{{\mathcal N}G}(V) := \tr_{{\mathcal N}G}(id: V \to V) \in [0,\infty).
\]
\end{definition}
The von Neumann dimension satisfies faithfulness, monotony, continuity and weak exactness (see \cite[Lemma 1.4]{Luc02}).

Let $X$ be a finite CW-complex with fundamental group $G$. Then we get a Hilbert ${\mathcal N}G$-chain complex $C^{(2)}_*({\tilde X}) := l^2(G) \otimes_{\zz G} C_*({\tilde X})$, where $C_*({\tilde X})$ is the cellular chain complex of the universal covering of $X$. Notice that $C_n({\tilde X})$ is a finite free $\zz G$-module with basis given by a cellular structure of $X$.
\begin{definition}
We define the $n$-th (reduced) $L^2$-homology and the $n$-th $L^2$-Betti number of a finite CW-complex $X$ by
\begin{eqnarray*}
H_n^{(2)}({\tilde X}) &:=& \ker(c_n^{(2)}({\tilde X})) / \overline{\im(c_{n+1}^{(2)}({\tilde X}))},\\
b_n^{(2)}({\tilde X}) &:=& \dim_{{\mathcal N}G}(H_n^{(2)}({\tilde X})).
\end{eqnarray*}
\end{definition}

In the definition of $H_n^{(2)}({\tilde X})$ we divide by the closure of the image to get the structure of a Hilbert ${\mathcal N}G$-module on $H_n^{(2)}({\tilde X})$.
Notice that $H_n^{(2)}({\tilde X})$ is isometrically $G$-isomorphic to the kernel of the combinatorial Laplace operator, i.e.
\[
H_n^{(2)}({\tilde X}) = \ker(c^{(2)}_n({\tilde X})^* \circ c^{(2)}_n({\tilde X}) + c^{(2)}_{n+1}({\tilde X}) \circ c^{(2)}_{n+1}({\tilde X})^*).
\]
The basic properties of the $L^2$-Betti numbers like homotopy invariance, Euler-Poincar{´e} formula or multiplicativity under finite coverings are described in \cite[Theorem 1.7]{Luc02}.

For the definition of the Novikov-Shubin invariants and the $L^2$-torsion we need spectral density functions.
\begin{definition}
Let $f: U \to V$ be a morphism of finitely generated Hilbert ${\mathcal N}G$-modules. Denote by $\{E_\lambda^{f^*f}: U \to U \mid \lambda \in \rr\}$ the family of spectral projections of the positive endomorphism $f^*f$. Define the \emph{spectral density function} of $f$ by
\[
F(f): \rr \to [0,\infty), \lambda \mapsto \dim_{{\mathcal N}G}(\im(E_{\lambda^2}^{f^*f})).
\]
\end{definition}
The spectral density function is monotonous and right-continuous. It defines a measure on the Borel $\sigma$-algebra on $\rr$ which is uniquely determined by $dF(f)((a,b]) := F(f)(b)-F(f)(a)$ for $a<b$.

\begin{definition}
Let $X$ be a finite CW-complex with fundamental group $G$. We define its \emph{Novikov-Shubin invariants} by
\[
\alpha_n({\tilde X}) := \liminf_{\lambda \to 0^+} \frac{\ln(F(c_n^{(2)}({\tilde X}),\lambda)-F(c_n^{(2)}({\tilde X}),0))}{\ln(\lambda)} \in [0,\infty],
\]
if $F(c_n^{(2)}({\tilde X}),\lambda) > F(c_n^{(2)}({\tilde X}),0)$ holds for all $\lambda > 0$. Otherwise we set $\alpha_n({\tilde X}) := \infty^+$ where
$\infty^+$ is a new formal symbol.
\end{definition}

For the basic properties of the Novikov-Shubin invariants like homotopy invariance or invariance under finite coverings we refer to \cite[Theorem 8.8]{Luc02}.

\begin{definition}
Let $f: U \to V$ be a morphism of finitely generated Hilbert ${\mathcal N}G$-modules. We define the \emph{determinant} of $f$ by
\[
{\textstyle\det}_{{\mathcal N}G}(f) := \exp(\int_{0^+}^\infty \ln(\lambda) \, dF(f)(\lambda))
\]
if $\int_{0^+}^\infty \ln(\lambda) \, dF(f)(\lambda) > -\infty$ and by $\det_{{\mathcal N}G}(f) := 0$ otherwise.
\end{definition}

We use this determinant to define the $L^2$-torsion. Some properties of this determinant can be found in \cite[Lemma 9.7]{Luc02}.

\begin{definition}
Let $X$ be a finite CW-complex with fundamental group $G$. Suppose that $b_n^{(2)}({\tilde X}) = 0$ and $\det_{{\mathcal N}G}(c_n^{(2)}({\tilde X})) > 0$ for all $n$. We define its \emph{$L^2$-torsion} by 
\[
\rho^{(2)}({\tilde X}) := - \sum_{n \geq 0} (-1)^n \cdot \ln({\textstyle\det}_{{\mathcal N}G}(c_n^{(2)}({\tilde X}))) \in \rr.
\]
\end{definition}

The $L^2$-torsion can also expressed in terms of the Laplace operators $\Delta_n := c^{(2)}_n({\tilde X})^* \circ c^{(2)}_n({\tilde X}) + c^{(2)}_{n+1}({\tilde X}) \circ c^{(2)}_{n+1}({\tilde X})^*$:
\[
\rho^{(2)}({\tilde X}) := - \frac{1}{2} \cdot \sum_{n \geq 0} (-1)^n \cdot n \cdot \ln({\textstyle\det}_{{\mathcal N}G}(\Delta_n)).
\]
A priori it is not clear if the $L^2$-torsion is a homotopy invariance. But it is known to be true if the fundamental group lies in a large class of group as we will see in the following propositions.

\begin{definition}
A group $G$ has \emph{semi-integral determinant} if for all matrices $A \in M(m \times n;{\zz}G)$ the determinant of the morphism $r_A^{(2)}: l^2(G)^m \to l^2(G)^n$ given by right multiplication with $A$ satisfies $\det_{{\mathcal N}G}(r_A^{(2)}) \geq 1$.
\end{definition}

It is conjectured that all groups have semi-integral determinant. The following propositions are proved in \cite{Sch98}.

\begin{proposition}\label{classG}
Let ${\mathcal G}$ be the smallest class of groups containing the trivial group and being closed under the following operations:
\begin{enumerate}
\item Amenable extensions\\ Let $H \subset G$ be a subgroup. Suppose that $H \in {\mathcal G}$ and that the quotient $G/H$ is an amenable homogeneous space. Then $G \in {\mathcal G}$.
\item Colimits\\ If $G = \colim_{i \in I} G_i$ and $G_i \in {\mathcal G}$ for all $i \in I$ then $G \in {\mathcal G}$.
\item Inverse limits\\ If $G = \lim_{i \in I} G_i$ and $G_i \in {\mathcal G}$ for all $i \in I$ then $G \in {\mathcal G}$.
\end{enumerate}
Any group $G \in {\mathcal G}$ has semi-integral determinant.
\end{proposition}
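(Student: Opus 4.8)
The plan is to argue by structural induction on the class ${\mathcal G}$. Let ${\mathcal S}$ denote the class of all groups with semi-integral determinant. It suffices to show that ${\mathcal S}$ contains the trivial group and is closed under amenable extensions, colimits and inverse limits; since ${\mathcal G}$ is by definition the smallest class with these three closure properties, this yields ${\mathcal G} \subseteq {\mathcal S}$, which is the assertion. Throughout I will use the following basic fact: for a subgroup $H \leq G$ and a matrix $A$ over ${\zz}H$, the inclusion ${\mathcal N}H \hookrightarrow {\mathcal N}G$ is trace-preserving, so the spectral density functions of $r_A^{(2)}$ computed over $H$ and over $G$ agree, whence $\det_{{\mathcal N}H}(r_A^{(2)}) = \det_{{\mathcal N}G}(r_A^{(2)})$. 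In particular ${\mathcal S}$ is closed under passage to subgroups, a point I will need below.

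For the base case let $G$ be trivial, so that ${\mathcal N}G = {\cc}$ and $\det_{{\mathcal N}G}(r_A^{(2)})$ is the Fuglede--Kadison determinant of an integer matrix $A$ with respect to the unnormalized trace. Unwinding the definition gives $\det_{{\mathcal N}G}(r_A^{(2)}) = \bigl(\prod_{\mu > 0} \mu\bigr)^{1/2}$, where $\mu$ runs over the non-zero eigenvalues of the positive semi-definite integer matrix $A^* A$. The product of the non-zero eigenvalues of $A^* A$ is, up to sign, the last non-vanishing coefficient of its characteristic polynomial (equivalently, by Cauchy--Binet, a sum of squares of minors of $A$); it is therefore a positive integer and in particular $\geq 1$. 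Thus $\det_{{\mathcal N}G}(r_A^{(2)}) \geq 1$.

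The technical heart is closure under amenable extensions. Suppose $H \leq G$ lies in ${\mathcal S}$ and that $G/H$ is an amenable homogeneous space, and fix $A \in M(m \times n;{\zz}G)$. Decompose $l^2(G) = \bigoplus_{c \in G/H} l^2(c)$, choose a F\o{}lner sequence $(F_k)$ of finite subsets of $G/H$, and compress $r_A^{(2)}$ to the subspaces spanned by the cosets lying in $F_k$. After choosing coset representatives, each such compression is right multiplication by a matrix over ${\zz}H$ of size proportional to $|F_k|$, whose ${\mathcal N}H$-determinant is $\geq 1$ by the hypothesis $H \in {\mathcal S}$. The F\o{}lner condition bounds the boundary contributions and forces the associated spectral density functions to converge to that of $r_A^{(2)}$ over $G$ in such a way that the lower bound $\geq 1$ survives in the limit. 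I expect this last step to be the main obstacle: since $\det_{{\mathcal N}G}$ is an integral of $\ln\lambda$ against $dF$, it is highly sensitive to spectral mass accumulating at $\lambda = 0$, and one must exploit the uniform lower bound coming from semi-integrality over $H$ to prevent such mass from driving the determinant below $1$ (equivalently, to keep $r_A^{(2)}$ of determinant class).

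Finally I treat colimits and inverse limits through the approximation results for the Fuglede--Kadison determinant along a system of groups. If $G = \colim_{i} G_i$, then any $A \in M(m \times n;{\zz}G)$ involves only finitely many group elements, so by directedness it is the image of a matrix over ${\zz}G_{i_0}$ for some index $i_0$; writing $A_i$ for its image over ${\zz}G_i$, the approximation inequality $\ln \det_{{\mathcal N}G}(r_A^{(2)}) \geq \limsup_{i} \ln \det_{{\mathcal N}G_i}(r_{A_i}^{(2)})$ together with $\det_{{\mathcal N}G_i}(r_{A_i}^{(2)}) \geq 1$ forces $\det_{{\mathcal N}G}(r_A^{(2)}) \geq 1$. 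For an inverse limit $G = \lim_i G_i$, set $N_i = \ker(G \to G_i)$, so that the $N_i$ form a decreasing family with trivial intersection and each quotient $G/N_i$ embeds into $G_i \in {\mathcal S}$, hence lies in ${\mathcal S}$ by the subgroup-closure noted above. The corresponding approximation statement for the tower $G = \lim_i G/N_i$ then yields $\det_{{\mathcal N}G}(r_A^{(2)}) \geq 1$. In both cases the delicate point is identical to the one above, namely the uniform control of the spectral density of $r_A^{(2)}$ near $\lambda = 0$ along the system, and here too it is precisely the uniform bound $\geq 1$ supplied by membership in ${\mathcal S}$ that licenses passing the inequality to the limit.
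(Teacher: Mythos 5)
The first thing to note is that the paper does not prove this proposition at all: its ``proof'' is the citation to \cite{Sch98}, so the only meaningful comparison is with Schick's argument. Your proposal reconstructs the architecture of that argument faithfully: the reduction, by minimality of ${\mathcal G}$, to showing that the class ${\mathcal S}$ of groups with semi-integral determinant has the three closure properties; the base case via integrality of the characteristic polynomial of $A^*A$ (your Cauchy--Binet computation is correct); the closure of ${\mathcal S}$ under subgroups via the fact that induction along ${\mathcal N}H \hookrightarrow {\mathcal N}G$ preserves spectral density functions; the F\o{}lner compression for amenable extensions; and trace approximation for colimits and inverse limits. This is the right strategy, and no step of it is headed toward failure.

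The genuine gap is that the two statements carrying all of the analytic content are invoked rather than proved, as you acknowledge yourself (``I expect this last step to be the main obstacle'', ``the approximation inequality''). Concretely, what is missing is: (a) the boundary estimate showing that for each fixed polynomial $p$ the normalized traces $\frac{1}{|F_k|}\tr_{{\mathcal N}H}(p(B_k^*B_k))$ of the F\o{}lner compressions $B_k$ converge to $\tr_{{\mathcal N}G}(p(A^*A))$; and (b) the semicontinuity statement that such convergence of traces (equivalently, weak convergence of the spectral measures, which have uniformly bounded support) implies $\ln{\textstyle\det}_{{\mathcal N}G}(r_A^{(2)}) \geq \limsup_k \frac{1}{|F_k|}\ln{\textstyle\det}_{{\mathcal N}H}(r_{B_k}^{(2)})$, together with its analogues for colimits and inverse limits. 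These two lemmas \emph{are} the theorem; they are exactly what \cite{Sch98} supplies. I would also correct your diagnosis of where the difficulty sits: for the determinant inequality needed here, (b) is the ``easy'' Fatou direction --- discarding the spectral mass in $(0,\epsilon]$ only increases the integral of $\ln\lambda$, so $\ln{\textstyle\det}_i \leq \int_{(\epsilon,K]}\ln\lambda\,d\mu_i \to \int_{(\epsilon,K]}\ln\lambda\,d\mu$, and letting $\epsilon \to 0$ gives the claim; the uniform bounds $\geq 1$ from the inductive hypothesis enter only as the quantity whose limsup is taken. (It is the approximation of $L^2$-Betti numbers, which this proposition does not assert, that genuinely requires those bounds to control spectral mass near $\lambda = 0$.) Two smaller imprecisions: in the colimit case the maps $G_{i_0} \to G_i \to G$ need not be injective, so you must argue that any relation holding in $G$ already holds in some $G_i$ --- true for directed colimits, but it must be said, since it is what yields trace convergence; and in the inverse limit case you do not need, and may not have, $G = \lim_i G/N_i$: all that is required is a directed family of normal subgroups $N_i$ with $\bigcap_i N_i = \{1\}$ and each $G/N_i \in {\mathcal S}$.
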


Notice that the class of groups ${\mathcal G}$ contains all amenable groups, is residually closed and is closed under taking subgroups, forming direct and inverse limits of directed systems and forming direct and free products.

\begin{proposition}
\begin{enumerate}
\item Let $X$ be a finite CW-complex such that $\pi_1(X)$ has semi-integral determinant. Then $\det_{{\mathcal N}G}(c_n^{(2)}({\tilde X})) > 0$ for all $n$.
\item Let $f: X \to Y$ be a homotopy equivalence of finite CW-complexes such that $\pi_1(X) \cong \pi_1(Y)$ has semi-integral determinant. Suppose that $b_n^{(2)}({\tilde X}) = b_n^{(2)}({\tilde Y}) = 0$ for all $n$. Then $\rho^{(2)}({\tilde X}) = \rho^{(2)}({\tilde Y})$.
\end{enumerate}
\end{proposition}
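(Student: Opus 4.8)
The plan is to reduce both assertions to the behaviour of the Fuglede--Kadison determinant on integral matrices, exploiting the hypothesis of semi-integral determinant. For assertion (1) I would observe that, with respect to the cellular $\zz G$-basis, the boundary map $c_n(\tilde X) \colon C_n(\tilde X) \to C_{n-1}(\tilde X)$ is given by a matrix $A \in M(m \times n; \zz G)$, and that after applying $l^2(G) \otimes_{\zz G} -$ the induced operator $c_n^{(2)}(\tilde X)$ is precisely the right-multiplication operator $r_A^{(2)} \colon l^2(G)^m \to l^2(G)^n$. The definition of semi-integral determinant then gives $\det_{\mathcal N G}(c_n^{(2)}(\tilde X)) = \det_{\mathcal N G}(r_A^{(2)}) \geq 1 > 0$, which is exactly the claim. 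This is immediate once the identification $c_n^{(2)}(\tilde X) = r_A^{(2)}$ is made precise.

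For assertion (2) the key device is an $L^2$-torsion homomorphism $\Phi^{(2)} \colon \mathrm{Wh}(G) \to \rr$. I would define it on the class of a unit $A \in GL(\zz G)$ by $\Phi^{(2)}([A]) := \ln \det_{\mathcal N G}(r_A^{(2)})$ and check that it is well defined, i.e.\ that it vanishes on the trivial units $\pm g$ and is invariant under the stabilisation and elementary operations generating the relations in $\mathrm{Wh}(G)$. Well-definedness rests on the multiplicativity $\det_{\mathcal N G}(g \circ f) = \det_{\mathcal N G}(g) \cdot \det_{\mathcal N G}(f)$ for composable weak isomorphisms, available from the stated properties of the determinant. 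The crucial point is then that $\Phi^{(2)}$ vanishes identically when $G$ has semi-integral determinant: if $A \in GL(\zz G)$ is a unit, then both $A$ and $A^{-1}$ are integral matrices, so semi-integrality yields $\det_{\mathcal N G}(r_A^{(2)}) \geq 1$ and $\det_{\mathcal N G}(r_{A^{-1}}^{(2)}) \geq 1$; multiplicativity together with $r_A^{(2)} \circ r_{A^{-1}}^{(2)} = \id$ forces $\det_{\mathcal N G}(r_A^{(2)}) \cdot \det_{\mathcal N G}(r_{A^{-1}}^{(2)}) = 1$, whence both determinants equal $1$ and $\Phi^{(2)}([A]) = 0$.

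It remains to relate the difference $\rho^{(2)}(\tilde Y) - \rho^{(2)}(\tilde X)$ to $\Phi^{(2)}$ applied to the Whitehead torsion of $f$. I would pass to the cellular chain map $C_*(\tilde f) \colon C_*(\tilde X) \to C_*(\tilde Y)$ of based free finite $\zz G$-complexes and form its mapping cone, a contractible based free finite $\zz G$-chain complex whose torsion is the Whitehead torsion $\tau(f) \in \mathrm{Wh}(G)$. Using the additivity (sum formula) of $L^2$-torsion along the short exact sequence relating $C_*(\tilde X)$, $C_*(\tilde Y)$ and this cone --- which is legitimate precisely because all $L^2$-Betti numbers vanish and, by assertion (1), all determinants are positive so that each $L^2$-torsion is defined --- one obtains $\rho^{(2)}(\tilde Y) - \rho^{(2)}(\tilde X) = \Phi^{(2)}(\tau(f))$. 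Since $\Phi^{(2)} = 0$, the right-hand side is $0$, giving $\rho^{(2)}(\tilde X) = \rho^{(2)}(\tilde Y)$.

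I expect the main obstacle to be this last step: establishing the clean sum formula for $L^2$-torsion over the short exact sequence of Hilbert $\mathcal N G$-chain complexes, and identifying the torsion of the contractible cone complex with $\Phi^{(2)}(\tau(f))$. This requires controlling that the relevant operators remain weak isomorphisms with positive determinant throughout, so that the additivity of $\ln \det_{\mathcal N G}$ can be applied term by term; the verification that $\Phi^{(2)}$ is well defined on $\mathrm{Wh}(G)$ is a secondary but necessary technical point.
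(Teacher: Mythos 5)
Your proposal is correct and is essentially the proof the paper points to: the paper gives no argument of its own for this proposition but simply cites Schick \cite{Sch98}, and your reconstruction --- part (1) from semi-integrality applied to the integral cellular boundary matrices, part (2) via the determinant homomorphism on $\mathrm{Wh}(\pi_1(X))$, which semi-integrality forces to vanish, combined with the mapping-cone/additivity formula for $L^2$-torsion --- is precisely the standard argument of that reference. The steps you flag as the main technical work (the sum formula over the based split exact sequence of the cone, and the identification of the cone's $L^2$-torsion with the image of $\tau(f)$ under the determinant homomorphism) are exactly the content of the cited result, and nothing in your outline would fail.
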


\section{Novikov-Shubin Invariants}
\label{NovShuInv}

The main result of this section is Theorem \ref{theoNS}. The proof is a combination of a remarkable group theoretical result of Hillman and Linnell (Proposition \ref{HL}) and results of 
L\"uck, Reich and Schick published in the paper \cite{LRS99}. In \cite[Definition 2.2]{LRS99} a capacity $c(M)$ for ${\mathcal N}G$-modules $M$ is introduced, where ${\mathcal N}G$ denotes the group von Neumann algebra of the group $G$. These invariants take values in $\{ 0^- \} \coprod [0,\infty]$ where $0^-$ is a new formal symbol. The relation between the capacity function and the Novikov-Shubin invariants is as follows (compare \cite[Theorem 6.1]{Luc97}):
Let $X$ be a finite CW-complex. Denote by $H_n^{\pi_1(X)}({\tilde X};{\mathcal N}\pi_1(X))$ the ${\mathcal N}\pi_1(X)$-module given by the $n$-th homology of the chain complex ${\mathcal N}\pi_1(X) \otimes_{\zz \pi_1(X)} C_*^{sing}({\tilde X})$ where $C_*^{sing}({\tilde X})$ is the singular chain complex of ${\tilde X}$. Then
\[
\alpha_{n}({\tilde X}) = c(H_{n-1}^{\pi_1(X)}({\tilde X};{\mathcal N}\pi_1(X)))^{-1} \mbox{ for all } n \geq 1.
\]
(Here $0^{-1}$ and $(0^-)^{-1}$ are defined as $0^{-1} := \infty$ and $(0^-)^{-1} := \infty^+$.)\\
The property above allows to extend the Novikov-Shubin invariants $\alpha_n(Y)$ to arbitrary topological spaces $Y$ with an action of a group $G$ by the expression $c(H_{n-1}^G(Y;{\mathcal N}G))^{-1}$.

We now consider the capacities of $H_n^G(EG;{\mathcal N}G)$ where $EG$ is any universal free $G$-space. In view of the next proposition we notice that there is a technical notion for ${\mathcal N}G$-modules ``cofinal-measurable'' defined in \cite[Definition 2.1]{LRS99}.
\begin{proposition} \label{LRS}
\begin{enumerate}
\item Let $k \geq 1$. The ${\mathcal N}\zz^k$-module $H_n^{\zz^k}(E\zz^k;{\mathcal N}\zz^k)$ is cofinal-measurable for all $n \geq 0$ and
    \[
    c(H_n^{\zz^k}(E\zz^k;{\mathcal N}\zz^k)) = \left\{ \begin{array}{ll} 1/k & \mbox{ if } 0 \leq n \leq k-1 \\ 0^- & \mbox{ if } n \geq k \end{array} \right. \label{LRS1}
    \]
\item If there is a cofinal system of subgroups $H < G$ with cofinal-measurable $H_n^H(EH;{\mathcal N}H)$, then $H_n^G(EG;{\mathcal N}G)$ is cofinal-measurable and
    \[
    c(H_n^G(EG;{\mathcal N}G)) \leq \liminf \{ c(H_n^H(EH;{\mathcal N}H)) \}. \label{LRS2}
    \]
\item Let $H \lhd G$ be a normal subgroup. Suppose that $H_n^H(EH;{\mathcal N}H)$ is cofinal-measurable for all $n \geq 0$. Then $H_n^G(EG;{\mathcal N}G)$ is cofinal-measurable for all $n \geq 0$ and
    \[
    c(H_n^G(EG;{\mathcal N}G)) \leq \sum_{i=0}^n c(H_i^H(EH;{\mathcal N}H)). \label{LRS3}
    \]
\end{enumerate}
\end{proposition}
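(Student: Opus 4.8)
The plan is to prove the three assertions separately, building from an explicit computation for $\zz^k$ up to the inductive statements via the homological calculus of the capacity function; throughout I would use only the formal properties of $c$ established alongside its definition in \cite{LRS99} (monotonicity under subquotients, subadditivity under short exact sequences, and semicontinuity under colimits) together with the stability of the class of cofinal-measurable modules.

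For assertion (1) I would model $E\zz^k$ by $\rr^k$ with its standard cubical $\zz^k$-CW structure, so that the cellular chain complex over the group ring $\zz[\zz^k]$ is the Koszul complex on the $k$ generators, with differentials assembled from the elements $t_j - 1$. Applying the Fourier isomorphism $\mathcal{N}\zz^k \cong L^\infty(T^k)$, under which $t_j$ becomes the $j$-th coordinate function $z_j$ on the dual torus $T^k$, the modules $H_n^{\zz^k}(E\zz^k;\mathcal{N}\zz^k)$ become the homology of this Koszul complex over $L^\infty(T^k)$. Since the capacity depends only on the germ of the spectral measure at $0$, it is governed by the behaviour of the maps $z_j - 1$ near the identity $1 \in T^k$, where linearising $z_j - 1 \approx 2\pi i\,\theta_j$ reduces the estimate to the Lebesgue measure of $\{|\theta| \le \lambda\}$, which scales like $\lambda^k$; reading off this exponent gives $c = 1/k$ in the degrees $0 \le n \le k-1$. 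In degrees $n \ge k$ the complex has length $k$ and its top differential is injective (multiplication by $z_j - 1$ is injective on $L^\infty(T^k)$), so the module vanishes and one records the formal value $0^-$. The only genuine work here is the elementary measure estimate near the identity.

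Assertion (2) I would read with $G = \colim_i H_i$ over the cofinal directed system. As $EG$ is a free contractible $G$-space it restricts to a model of $EH_i$ for every member, so $C_*(EG) = \colim_i \zz G \otimes_{\zz H_i} C_*(EH_i)$, and since homology commutes with directed colimits and $\mathcal{N}G$ is dimension-flat over each $\mathcal{N}H_i$, one obtains $H_n^G(EG;\mathcal{N}G) = \colim_i \mathcal{N}G \otimes_{\mathcal{N}H_i} H_n^{H_i}(EH_i;\mathcal{N}H_i)$. I would then invoke that induction along $H_i < G$ preserves capacity and cofinal-measurability, and that capacity is upper semicontinuous under directed colimits, i.e. $c(\colim_i M_i) \le \liminf_i c(M_i)$. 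Combining these yields both the cofinal-measurability of $H_n^G(EG;\mathcal{N}G)$ and the bound by the liminf, the points to verify being exactly these two formal properties of $c$ and the closure of the cofinal-measurable class under colimits.

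For assertion (3) I would use the Lyndon--Hochschild--Serre-type spectral sequence attached to the extension $1 \to H \to G \to G/H \to 1$ in its $\mathcal{N}G$-module incarnation, with $E^2_{p,q} = H_p^{G/H}\!\big(E(G/H); H_q^H(EH;\mathcal{N}H)\big)$ converging to $H_{p+q}^G(EG;\mathcal{N}G)$, the inner module carrying the residual $G/H$-action. The crux is the lemma that forming $G/H$-equivariant homology of a cofinal-measurable module does not increase its capacity, so that $c(E^2_{p,q}) \le c(H_q^H(EH;\mathcal{N}H))$ for every $p$; granting this, each $E^2_{p,q}$ is cofinal-measurable and each $E^\infty_{p,q}$, being a subquotient, has capacity no larger. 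Since $H_n^G(EG;\mathcal{N}G)$ is an iterated extension of the terms $E^\infty_{p,q}$ with $p+q = n$, subadditivity of $c$ under short exact sequences and its monotonicity under subquotients assemble the bound $c(H_n^G) \le \sum_{p+q=n} c(E^\infty_{p,q}) \le \sum_{i=0}^n c(H_i^H(EH;\mathcal{N}H))$. I expect the main obstacle to be precisely this lemma together with the rigorous construction of the spectral sequence for these typically infinitely generated modules, where even the definition of $c$ relies on the cofinal-measurable machinery; by contrast assertion (1) is a concrete Fourier-analytic computation and assertion (2) reduces to two semicontinuity and induction properties of the capacity.
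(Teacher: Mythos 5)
The paper offers no argument of its own for this proposition: its entire proof is the citation ``A proof is given in \cite[Theorem 3.7]{LRS99}.'' So there is nothing internal to compare you against line by line; what you have written is essentially a reconstruction of the cited proof, and its skeleton is sound. Part (1) is indeed the Koszul computation: over $L^\infty(T^k)$ the Laplacian of the Koszul complex on $z_1-1,\ldots,z_k-1$ is multiplication by $\sum_j |z_j-1|^2$, whose sublevel sets have Lebesgue measure of order $\lambda^k$ near the identity, giving capacity $1/k$ in degrees $0\le n\le k-1$ and the zero module (value $0^-$) above. Part (2) is correct as well: for a directed union $G=\bigcup_i H_i$ one has $\colim_i \zz G\otimes_{\zz H_i}M\cong M$ for \emph{any} $\zz G$-module $M$ (push the finitely many group elements appearing in a tensor into a larger $H_j$), applied to $M=C_*(EG)$ whose restriction to $H_i$ is a model for $EH_i$; after that, everything rests on the two quoted properties of $c$ (induction invariance and semicontinuity under directed colimits), which are genuine lemmas of \cite{LRS99}, not formalities you could rederive on the spot. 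In part (3) two adjustments are worth recording. First, the coefficients in your $E^2$-term should be the induced module $\mathcal{N}G\otimes_{\mathcal{N}H}H_q^H(EH;\mathcal{N}H)$ (equivalently $H_q^H(EH;\mathcal{N}G)$, using flatness of $\mathcal{N}G$ over $\mathcal{N}H$), not the $\mathcal{N}H$-module itself; this is what makes the $E^2$-page consist of $\mathcal{N}G$-modules and is exactly where the induction lemma enters. Second, the lemma you flag as the main obstacle is in fact a formal consequence of the properties you already listed: $H_p(G/H;N)$ is a subquotient of a possibly infinite direct sum of copies of $N$; a finite sum satisfies $c(N\oplus N)=c(N)$, an infinite sum is the directed colimit of its finite subsums, so colimit semicontinuity gives $c(\bigoplus_I N)\le c(N)$, and monotonicity under subquotients finishes it. Likewise your worry about ``rigorously constructing'' the spectral sequence for infinitely generated modules is unfounded, since the Lyndon--Hochschild--Serre spectral sequence is purely algebraic and has no finiteness hypotheses. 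The net comparison: the paper buys brevity by outsourcing everything to \cite{LRS99}; your sketch makes the structure of that proof visible, but it is self-contained only modulo the same capacity calculus (monotonicity, subadditivity, colimit semicontinuity, induction invariance) that the citation encapsulates.
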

\begin{proof}
A proof is given in \cite[Theorem 3.7]{LRS99}.
\end{proof}

For the proof of Theorem \ref{theoNS} we need the following group theoretical result which is due to Hillman and Linnell.
\begin{proposition} \label{HL}
A group of finite cohomological dimension which has a non-trivial elementary amenable normal subgroup has a non-trivial torsionfree abelian normal subgroup.
\end{proposition}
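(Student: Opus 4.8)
The plan is to locate the desired subgroup inside the given normal subgroup and then promote it to a normal subgroup of the ambient group by arranging that it be \emph{characteristic}. Write $N \lhd G$ for the non-trivial elementary amenable normal subgroup, and recall the guiding principle: since conjugation by any $g \in G$ restricts to an automorphism of $N$, every characteristic subgroup of $N$ is automatically normal in $G$. Hence it suffices to produce a non-trivial characteristic torsion-free abelian subgroup of $N$. First I would exploit finite cohomological dimension: as every subgroup $H \le G$ satisfies $\operatorname{cd}(H) \le \operatorname{cd}(G) < \infty$ while finite cyclic groups have infinite cohomological dimension, $G$---and therefore $N$---is torsion-free. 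In particular any subgroup of $N$ that I find will automatically be torsion-free, and $N$ itself is infinite.

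Next I would pin down the structure of $N$. Finite cohomological dimension forces the elementary amenable group $N$ to have finite Hirsch length, so $N$ is a torsion-free elementary amenable group of finite Hirsch length. The structure theorem of Hillman and Linnell then applies: such a group is locally-finite-by-virtually-solvable. Since a non-trivial locally finite group contains elements of finite order, the torsion-freeness of $N$ forces its locally finite radical to be trivial, so $N$ is in fact virtually solvable (and still torsion-free, of finite Hirsch length).

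Finally I would extract the abelian subgroup. Let $R$ denote the Hirsch-Plotkin radical of $N$, i.e. its unique maximal normal locally nilpotent subgroup; being defined intrinsically, $R$ is characteristic in $N$. To see that $R \neq 1$, choose a solvable subgroup of finite index, pass to its normal core $C$ (still of finite index, normal in $N$, and solvable), and take the last non-trivial term $C^{(k)}$ of the derived series of $C$. As the derived series consists of fully invariant subgroups, $C^{(k)}$ is characteristic in $C$ and hence normal in $N$; it is non-trivial (because $C \neq 1$, as $N$ is infinite) and abelian, hence locally nilpotent, so $C^{(k)} \le R$ and $R \neq 1$. Now $R$ is a non-trivial torsion-free locally nilpotent group of finite Hirsch length, and such a group is nilpotent; therefore its centre $Z(R)$ is non-trivial. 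Since the centre is characteristic in $R$ and $R$ is characteristic in $N$, the subgroup $Z(R)$ is characteristic in $N$ (characteristicity being transitive). It is abelian, non-trivial and torsion-free, so by the guiding principle it is a non-trivial torsion-free abelian normal subgroup of $G$, as required.

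The step I expect to be the main obstacle is the last one, precisely in its insistence on \emph{characteristic} rather than merely normal subgroups: the easy abelian subgroup $C^{(k)}$ produced from the derived series is normal in $N$ but has no reason to be normal in $G$, so it cannot be used directly. Routing the argument through the intrinsically defined Hirsch-Plotkin radical and its centre is what repairs this, at the cost of invoking two structural facts---the Hillman-Linnell classification and the nilpotency of torsion-free locally nilpotent groups of finite rank---which is where the real content sits.
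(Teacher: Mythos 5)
Your proof is correct, but be aware that the paper itself contains no argument for this proposition at all: its ``proof'' is the single line ``For a proof see \cite[Corollary 2]{HL92}'', and the statement is a verbatim copy of that corollary. What you have written is, in effect, a reconstruction of how Hillman and Linnell derive their Corollary 2 from the main (titular) theorem of \cite{HL92}, namely that elementary amenable groups of finite Hirsch length are locally-finite by virtually-solvable; your logical skeleton --- finite cohomological dimension kills torsion and bounds the Hirsch length, the structure theorem plus torsion-freeness yields virtual solvability, and routing through characteristic subgroups (Hirsch--Plotkin radical, then its centre) promotes an abelian subgroup that is a priori only normal in $N$ to one normal in $G$ --- is essentially the one in the cited source. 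Two of your steps are genuine imported theorems rather than routine verifications and should be cited as such: (i) that an elementary amenable group of finite cohomological dimension has finite Hirsch length (one needs $h(N) \leq \operatorname{cd}(N)$, a result from the same circle of papers of Hillman and Hillman--Linnell, not a formal consequence of the definitions); and (ii) Mal'cev's theorem that a torsion-free locally nilpotent group of finite rank (equivalently, for these groups, of finite Hirsch length) is nilpotent. The second of these is not cosmetic: without nilpotency you cannot conclude $Z(R) \neq 1$, since locally nilpotent groups in general can have trivial centre (e.g.\ McLain's group), so the appeal to Mal'cev is where the conclusion actually gets forced. With those two citations supplied, the remaining steps --- subgroups inherit finite cohomological dimension, a characteristic subgroup of a normal subgroup is normal, the normal-core argument producing a non-trivial normal abelian subgroup of $N$ and hence a non-trivial Hirsch--Plotkin radical, and transitivity of characteristicity --- are all sound, and your argument is a complete and faithful substitute for the black-box citation in the paper.
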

\begin{proof}
For a proof see \cite[Corollary 2]{HL92}.
\end{proof}

\begin{theorem} \label{theoNS}
Let $X$ be a finite aspherical CW-complex.
\begin{enumerate}
\item If the fundamental group $\pi_1(X)$ possesses a non-trivial elementary amenable normal subgroup, then $\alpha_n({\tilde X}) \geq 1$ for all $n \geq 1$.
\item If the fundamental group $\pi_1(X)$ possesses a subnormal series
	\[
	\pi_1(X) \rhd G_m \rhd \ldots \rhd G_0
	\]
	with a non-trivial elementary amenable group $G_0$, then $\alpha_n({\tilde X})>0$ for all $n \geq 1$.
\end{enumerate}
\end{theorem}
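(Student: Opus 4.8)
The plan is to translate both assertions into statements about the capacity of the ${\mathcal N}\pi_1(X)$-modules and then to exploit Proposition \ref{LRS}. Write $G := \pi_1(X)$. Since $X$ is a finite aspherical CW-complex, its universal covering ${\tilde X}$ is a contractible free $G$-CW-complex and hence a model for $EG$; moreover $G$ has finite cohomological dimension (bounded by $\dim X$), and so does every subgroup of $G$. By the formula relating Novikov-Shubin invariants and capacities recalled before Proposition \ref{LRS}, we have $\alpha_n({\tilde X}) = c(H_{n-1}^G(EG;{\mathcal N}G))^{-1}$ for all $n \geq 1$. Thus assertion (1) amounts to showing $c(H_n^G(EG;{\mathcal N}G)) \leq 1$ for all $n \geq 0$, and assertion (2) amounts to showing that $c(H_n^G(EG;{\mathcal N}G))$ is different from $\infty$ for all $n \geq 0$.

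For assertion (1) I would first apply Proposition \ref{HL} to $G$: since $G$ has finite cohomological dimension and a non-trivial elementary amenable normal subgroup, it possesses a non-trivial torsionfree abelian normal subgroup $A \lhd G$. The next step is to control the capacities of $A$ itself. Writing $A$ as the directed union of its finitely generated subgroups $A_i \cong \zz^{r_i}$ and feeding the values from Proposition \ref{LRS}(1) into Proposition \ref{LRS}(2), I obtain that $H_n^A(EA;{\mathcal N}A)$ is cofinal-measurable for all $n$ and that $c(H_n^A(EA;{\mathcal N}A)) \leq \liminf_i c(H_n^{A_i}(EA_i;{\mathcal N}A_i))$. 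Because $A$ is a subgroup of $G$ it has finite rank $k \geq 1$ (a torsionfree abelian group of infinite rank contains $\zz^m$ for every $m$ and would have infinite cohomological dimension), so the rank-$k$ subgroups are cofinal and the liminf equals $1/k$ for $0 \leq n \leq k-1$ and $0^-$ for $n \geq k$. Finally I apply Proposition \ref{LRS}(3) to the normal subgroup $A \lhd G$: in the estimate $c(H_n^G(EG;{\mathcal N}G)) \leq \sum_{i=0}^n c(H_i^A(EA;{\mathcal N}A))$ at most $k$ summands are non-negligible, each bounded by $1/k$, so the total is $\leq 1$, as required.

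For assertion (2) I would argue by induction up the subnormal series $G \rhd G_m \rhd \ldots \rhd G_0$, using that every $G_i$ is a subgroup of $G$ and hence has finite cohomological dimension. The base of the induction is the group $G_1$ (or $G$ itself, in the case $m=0$): it has finite cohomological dimension and contains $G_0$ as a non-trivial elementary amenable normal subgroup, so the argument of assertion (1) shows that $H_n^{G_1}(EG_1;{\mathcal N}G_1)$ is cofinal-measurable for all $n$ with $c(H_n^{G_1}(EG_1;{\mathcal N}G_1)) \leq 1$. For the inductive step, assuming that $H_n^{G_j}(EG_j;{\mathcal N}G_j)$ is cofinal-measurable for all $n$ with finite capacity, I apply Proposition \ref{LRS}(3) to the normal subgroup $G_j \lhd G_{j+1}$; this yields that $H_n^{G_{j+1}}(EG_{j+1};{\mathcal N}G_{j+1})$ is cofinal-measurable for all $n$ and that $c(H_n^{G_{j+1}}(EG_{j+1};{\mathcal N}G_{j+1})) \leq \sum_{i=0}^n c(H_i^{G_j}(EG_j;{\mathcal N}G_j)) < \infty$. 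Climbing the series up to $G$ itself gives $c(H_n^G(EG;{\mathcal N}G)) \neq \infty$ for all $n$, whence $\alpha_n({\tilde X}) > 0$.

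The main obstacle is the middle step of assertion (1): the normal subgroup produced by Hillman and Linnell is only known to be torsionfree abelian and need not be finitely generated (it could be a group such as $\zz[1/2]$, or have higher rank). One must therefore pass through the colimit description and Proposition \ref{LRS}(2), rather than applying Proposition \ref{LRS}(1) directly, and one must track the formal symbol $0^-$ carefully through the liminf and through the finite sum in Proposition \ref{LRS}(3) so that the bound collapses to $\leq 1$. The finiteness of the cohomological dimension of $G$ enters precisely here, guaranteeing that $A$ has finite rank and hence that only finitely many of the summands in Proposition \ref{LRS}(3) actually contribute.
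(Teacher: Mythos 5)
Your proposal is correct and takes essentially the same route as the paper: translate Novikov-Shubin invariants into capacities, apply the Hillman--Linnell result (Proposition \ref{HL}), control the capacities of a torsionfree abelian group via Proposition \ref{LRS} (1) and (2), then feed this into Proposition \ref{LRS} (3), with an induction up the subnormal series for assertion (2). The only (harmless) deviation is your finite-rank argument for the abelian subgroup $A$: the paper does not need it, since the uniform bound $\sum_{i=0}^n c(H_i^{\zz^k}(E\zz^k;{\mathcal N}\zz^k)) \leq 1$ holds for every $k$ and passes to the liminf in Proposition \ref{LRS} (2), giving $\sum_{i=0}^n c(H_i^A(EA;{\mathcal N}A)) \leq 1$ even when $A$ has infinite rank.
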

\begin{proof}
Let us first consider a non-trivial torsionfree abelian group $H$. Since any non-trivial finitely generated subgroup of $H$ is isomorphic to $\zz^k$ for some $k \geq 1$, we conclude from Proposition \ref{LRS} (\ref{LRS1}) and (\ref{LRS2}) that $H_n^H(EH;{\mathcal N}H)$ is cofinal-measurable for all $n \geq 0$ and satisfies
\begin{equation}
\sum_{i=0}^n c(H_i^H(EH;{\mathcal N}H)) \leq 1 \mbox{ for all } n \geq 0. \label{eq_H}
\end{equation}
Now the statements are a direct consequence of the equation
\[
\alpha_{n}({\tilde X}) = c(H_{n-1}^{\pi_1(X)}(E\pi_1(X);{\mathcal N}\pi_1(X)))^{-1} \mbox{ for all } n \geq 1,
\]
Proposition \ref{HL}, Proposition \ref{LRS} (\ref{LRS3}) and equation \ref{eq_H}.
\end{proof}

\section{$L^2$-Betti Numbers and $L^2$-Torsion}
\label{L2Torsion}

One of the main results of this section is Theorem \ref{rhovan}. It shows the vanishing of $L^2$-Betti numbers and $L^2$-torsion if a certain localization property is satisfied.
This theorem allows us to prove Theorem \ref{rhomain} which coincides with the assertions \ref{maintheo1} and \ref{maintheo3} of Theorem \ref{maintheo}.

\begin{definition}
A subset $S \subset \zz G$ has the \emph{determinant localization property} if the following conditions are fulfilled:
\begin{enumerate}
\item The pair $(\zz G, S)$ satisfies the left and right Ore condition, i.e. $S$ is a multiplicative closed subset of non-zero divisors with $1 \in S$ and fulfills
\[
S \cdot t \, \cap \, \zz G \cdot s \neq \emptyset \mbox{ and } t \cdot S \, \cap \, s \cdot \zz G \neq \emptyset
\]
for all $t \in \zz G$ and $s \in S$.
\item For any $s \in S$ the induced morphism
\[
r_s^{(2)}: l^{(2)}(G) \to l^{(2)}(G), u \mapsto us
\]
is a weak isomorphism with determinant ${\textstyle\det}_{{\mathcal N}G}(r_s^{(2)}) = 1$.
\end{enumerate}
\end{definition}

Notice that for a subset $S \subset \zz G$ with determinant localization property we can build the quotient ring $S^{-1} \cdot \zz G$. Any element in $S^{-1} \cdot \zz G$ can be written as $s_1^{-1}r_1$ and $r_2 s_2^{-1}$ with $s_1, s_2 \in S$, $r_1, r_2 \in \zz G$. Moreover, for $x_1, \ldots, x_n \in S^{-1} \cdot \zz G$ there exist elements $s_1, s_2 \in S$ satisfying $s_1 x_i \in \zz G$ and $x_i s_2 \in \zz G$ for all $1 \leq i \leq n$. For more details about quotient rings we refer to \cite[Chapter II, \S 1]{Ste75}.

\begin{definition}
Let $S \subset \zz G$ be a subset with determinant localization property. Let $A \in M(n \times n; S^{-1} \cdot \zz G)$. There exist elements $s_1, s_2 \in S$ with
\[
s_1 \cdot A \cdot s_2 \in M(n \times n; \zz G) \subset M(n \times n; S^{-1} \cdot \zz G).
\]
(Of course, it is possible to choose $s_1=1$ or $s_2=1$.) We call the matrix $A$ \emph{weakly invertible} if the induced map $r^{(2)}_{s_1 \cdot A \cdot s_2}: l^2(G)^n \to l^2(G)^n$ given by right multiplication with the matrix $s_1 \cdot A \cdot s_2$ is a weak isomorphism. In this case we define
\[
{\textstyle\det}_{S^{-1} \cdot \zz G}(A) := {\textstyle\det}_{{\mathcal N}G}(r^{(2)}_{s_1 \cdot A \cdot s_2}).
\]
\end{definition}

Let $s'_1, s'_2 \in S$ be further elements satisfying $s'_1 \cdot A \cdot s'_2 \in M(n \times n; \zz G)$. The Ore condition implies the existence of elements $s_3, s_4 \in S$ and $a_3, a_4 \in \zz G$ with $s_3 s_1 = a_3 s'_1$ and $s_2 s_4 = s'_2 a_4$. We conclude that $r^{(2)}_{a_3}, r^{(2)}_{a_4}: l^2(G)^n \to l^2(G)^n$ are weak isomorphisms with determinant 1. The equation
\begin{eqnarray*}
r^{(2)}_{a_4} \circ r^{(2)}_{s'_1 \cdot A \cdot s'_2} \circ r^{(2)}_{a_3} & = & r^{(2)}_{a_3 s'_1 \cdot A \cdot s'_2 a_4}\\
& = & r^{(2)}_{s_3 s_1 \cdot A \cdot s_2 s_4}\\
& = & r^{(2)}_{s_4} \circ r^{(2)}_{s_1 \cdot A \cdot s_2} \circ r^{(2)}_{s_3}
\end{eqnarray*}
implies that $r^{(2)}_{s'_1 \cdot A \cdot s'_2}: l^2(G)^n \to l^2(G)^n$ is a weak isomorphism with
\[
{\textstyle\det}_{{\mathcal N}G}(r^{(2)}_{s'_1 \cdot A \cdot s'_2}) = {\textstyle\det}_{{\mathcal N}G}(r^{(2)}_{s_1 \cdot A \cdot s_2}).
\]
This shows that $\det_{S^{-1} \cdot \zz G}(A)$ is well-defined.

The following lemma is a direct consequence of \cite[Lemma 9.7]{Luc02}.

\begin{lemma} \label{detlem}
Let $S \subset \zz G$ be a subset with determinant localization property.
\begin{enumerate}
\item Let $A, B \in M(n \times n; S^{-1} \cdot \zz G)$ be weakly invertible. Then
    \[
    {\textstyle\det}_{S^{-1} \cdot \zz G}(AB) = {\textstyle\det}_{S^{-1} \cdot \zz G}(A) \cdot {\textstyle\det}_{S^{-1} \cdot \zz G}(B). \label{detlem1}
    \]
\item Let $A \in M(n \times n; S^{-1} \cdot \zz G)$, $B \in M(n \times m; S^{-1} \cdot \zz G)$ and $D \in M(m \times m; S^{-1} \cdot \zz G)$. Suppose that $A$ and $D$ are weakly invertible. Then
    \[
    \left( \begin{array}{cc} A & B \\ 0 & D \end{array} \right) \in M((n+m) \times (n+m); S^{-1} \cdot \zz G)
    \]
    is weakly invertible with
    \[
    {\textstyle\det}_{S^{-1} \cdot \zz G}(\left( \begin{array}{cc} A & B \\ 0 & D \end{array} \right)) = {\textstyle\det}_{S^{-1} \cdot \zz G}(A) \cdot {\textstyle\det}_{S^{-1} \cdot \zz G}(D). \label{detlem2}
    \]
\end{enumerate}
\end{lemma}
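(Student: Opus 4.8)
The plan is to reduce both statements to the multiplicativity and the block-triangularity of the von Neumann determinant $\det_{{\mathcal N}G}$ recorded in \cite[Lemma 9.7]{Luc02}, by clearing denominators in a way that keeps the relevant factorizations inside $\zz G$. Throughout I use two elementary facts. First, right multiplication reverses composition: for matrices $M,N$ over $\zz G$ one has $r^{(2)}_{MN}=r^{(2)}_N\circ r^{(2)}_M$. Second, by the well-definedness established just before the lemma, I may compute $\det_{S^{-1}\cdot\zz G}$ using any convenient pair of clearing elements; in particular I am free to clear only on the left (take $s_2=1$) or only on the right (take $s_1=1$).

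For the first assertion I would use the left Ore condition to choose $s\in S$ with $\bar A:=sA\in M(n\times n;\zz G)$ and the right Ore condition to choose $t\in S$ with $\bar B:=Bt\in M(n\times n;\zz G)$. Then $s\cdot AB\cdot t=\bar A\bar B\in M(n\times n;\zz G)$, so by definition $\det_{S^{-1}\cdot\zz G}(AB)=\det_{{\mathcal N}G}(r^{(2)}_{\bar A\bar B})$, while $\det_{S^{-1}\cdot\zz G}(A)=\det_{{\mathcal N}G}(r^{(2)}_{\bar A})$ and $\det_{S^{-1}\cdot\zz G}(B)=\det_{{\mathcal N}G}(r^{(2)}_{\bar B})$. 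Since $A$ and $B$ are weakly invertible, $r^{(2)}_{\bar A}$ and $r^{(2)}_{\bar B}$ are weak isomorphisms, and $r^{(2)}_{\bar A\bar B}=r^{(2)}_{\bar B}\circ r^{(2)}_{\bar A}$. By \cite[Lemma 9.7]{Luc02} the composite is again a weak isomorphism, which shows that $AB$ is weakly invertible, and $\det_{{\mathcal N}G}(r^{(2)}_{\bar B}\circ r^{(2)}_{\bar A})=\det_{{\mathcal N}G}(r^{(2)}_{\bar A})\cdot\det_{{\mathcal N}G}(r^{(2)}_{\bar B})$; since these are real numbers, the desired identity follows.

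For the second assertion the key point is to clear denominators on one side only, which preserves the block-triangular shape. Choose a single $s\in S$ with $sA$, $sB$ and $sD$ all over $\zz G$, which is possible because the finitely many entries admit a common left denominator. Then $s\cdot\left(\begin{array}{cc} A & B \\ 0 & D\end{array}\right)=\left(\begin{array}{cc} sA & sB \\ 0 & sD\end{array}\right)=:\bar M\in M((n+m)\times(n+m);\zz G)$, so that $\det_{S^{-1}\cdot\zz G}$ of the block matrix equals $\det_{{\mathcal N}G}(r^{(2)}_{\bar M})$, while $\det_{S^{-1}\cdot\zz G}(A)=\det_{{\mathcal N}G}(r^{(2)}_{sA})$ and $\det_{S^{-1}\cdot\zz G}(D)=\det_{{\mathcal N}G}(r^{(2)}_{sD})$. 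With respect to the decomposition $l^2(G)^{n+m}=l^2(G)^n\oplus l^2(G)^m$ the operator $r^{(2)}_{\bar M}$ is block triangular with diagonal entries $r^{(2)}_{sA}$ and $r^{(2)}_{sD}$, both weak isomorphisms by weak invertibility of $A$ and $D$. The block-triangular determinant formula of \cite[Lemma 9.7]{Luc02} then shows that $r^{(2)}_{\bar M}$ is a weak isomorphism, so that the block matrix is weakly invertible, with determinant $\det_{{\mathcal N}G}(r^{(2)}_{sA})\cdot\det_{{\mathcal N}G}(r^{(2)}_{sD})$, which is exactly the claimed product.

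I expect the only real subtlety to be the bookkeeping with Ore denominators: one must arrange the clearing so that the factorization $s\cdot AB\cdot t=\bar A\bar B$ in the first assertion, respectively the block-triangular shape in the second, survives the passage to $\zz G$, and one must keep track of the order reversal $r^{(2)}_{MN}=r^{(2)}_N\circ r^{(2)}_M$ so that the composition formula is invoked in the correct direction. None of these steps is a genuine obstacle; the entire content sits in the determinant properties at the ${\mathcal N}G$-level, so the lemma really is, as asserted, a direct consequence of \cite[Lemma 9.7]{Luc02}.
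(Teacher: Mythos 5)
Your proposal is correct and takes essentially the same route as the paper, which offers no written proof at all but simply declares the lemma ``a direct consequence of \cite[Lemma 9.7]{Luc02}''. Your denominator-clearing bookkeeping --- one-sided clearing chosen so that the factorization $s\cdot AB\cdot t=(sA)(Bt)$ and the block-triangular shape survive in $\zz G$, combined with the well-definedness discussion preceding the lemma and the order reversal $r^{(2)}_{MN}=r^{(2)}_N\circ r^{(2)}_M$ --- is exactly the argument the paper leaves implicit.
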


Let $G$ be a group with semi-integral determinant (see Proposition \ref{classG}) and $S \subset \zz G$ a subset with determinant localization property. We obtain $\det_{S^{-1} \cdot \zz G}(A) \geq 1$ for any weakly invertible matrix $A \in M(n \times n; S^{-1} \cdot \zz G)$. Moreover, we conclude from Lemma \ref{detlem} (\ref{detlem1}) that $\det_{S^{-1} \cdot \zz G}(A) = 1$ for any invertible matrix $A$.

For the studying of $L^2$-torsion we introduce the notion of a weak chain contraction.

\begin{definition}
Let $C_*$ be a finite free based $S^{-1} \cdot \zz G$-chain complex. A \emph{weak chain contraction} for $C_*$ is a pair $(\gamma_*,u_*)$ such that
\begin{enumerate}
 \item $u_*: C_* \to C_*$ is a chain map,
 \item $u_p: C_p \to C_p$ is weakly invertible for all $p \in \zz$ and
 \item $\gamma_*: u_* \simeq 0$ is a chain homotopy satisfying $\gamma_* \circ u_* = u_* \circ \gamma_*$.
\end{enumerate}
\end{definition}

There is the following main example of a weak chain contraction.

\begin{example}\label{wcc-ex}
Let $X$ be a finite CW-complex such that $\pi_1(X)$ has semi-integral determinant and $b_n^{(2)}({\tilde X})=0$ for all $n$. Since 
\[
H_n^{(2)}({\tilde X}) = \ker(c^{(2)}_n({\tilde X})^* \circ c^{(2)}_n({\tilde X}) + c^{(2)}_{n+1}({\tilde X}) \circ c^{(2)}_{n+1}({\tilde X})^*),
\]
we conclude that
\[
\Delta_n := c^{(2)}_n({\tilde X})^* \circ c^{(2)}_n({\tilde X}) + c^{(2)}_{n+1}({\tilde X}) \circ c^{(2)}_{n+1}({\tilde X})^*: C^{(2)}_n({\tilde X}) \to C^{(2)}_n({\tilde X})
\]
is injective and hence a weak isomorphism. Hence, for any subset $S \subset \zz G$ with determinant localization property 
a weak chain contraction $(\gamma_*, u_*)$ for $S^{-1} \cdot C_*({\tilde X})$ is given by
\[
\gamma_n := c_{n+1}({\tilde X})^* \mbox{ and } u_n := c_n({\tilde X})^* \circ c_n({\tilde X}) + c_{n+1}({\tilde X}) \circ c_{n+1}({\tilde X})^*.
\]
\end{example}

\begin{lemma} \label{wcc-lem1}
Let $G$ be a group with semi-integral determinant and $S \subset \zz G$ a subset with determinant localization property. Let $(\gamma_*,u_*)$ be a weak chain contraction for a finite free based $S^{-1} \cdot \zz G$-chain complex $C_*$. We write $C_{\odd} := \oplus_{n \in \zz} C_{2n+1}$, $C_{\even} := \oplus_{n \in \zz} C_{2n}$.\\
Then the maps $(u c+\gamma)_{\odd}: C_{\odd} \to C_{\even}$, $(u c+\gamma)_{\even}: C_{\even} \to C_{odd}$, $u_{\odd}: C_{\odd} \to C_{\odd}$ and $u_{\even}: C_{\even} \to C_{\even}$ are weakly invertible and satisfy
\begin{eqnarray*}
{\textstyle\det}_{S^{-1} \cdot \zz G}(u_{\odd}) & = & {\textstyle\det}_{S^{-1} \cdot \zz G}(u_{\even}),\\
\ln \det((u  c+\gamma)_{\odd}) - \ln \det(u_{\odd}) & = & -\ln \det((uc+\gamma)_{\even}) + \ln \det(u_{\even}).
\end{eqnarray*}
Moreover, the number ${\textstyle\det}_{S^{-1} \cdot \zz G}((uc+\gamma)_{\odd}) - {\textstyle\det}_{S^{-1} \cdot \zz G}(u_{\odd})$ does not depend on the choice of the weak chain contraction $(\gamma_*,u_*)$.
\end{lemma}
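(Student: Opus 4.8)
The plan is to reduce everything to the single identity
\[
(uc+\gamma)^2 = u^2+\gamma^2 .
\]
Writing $D:=uc+\gamma$ and expanding $D^2 = ucuc + uc\gamma + \gamma uc + \gamma^2$, I would simplify using the three defining properties of a weak chain contraction. The chain-map relation $uc=cu$ gives $ucuc = u^2c^2 = 0$; and the homotopy relation $c\gamma+\gamma c = u$ together with $\gamma u = u\gamma$ gives $uc\gamma+\gamma uc = uc\gamma + u\gamma c = u(c\gamma+\gamma c) = u^2$. I would also record that $N:=\gamma^2$ raises the degree by two, hence is nilpotent on the finite complex, and that it commutes with $u$ and therefore with $u^2$.

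Restricting the identity to the even and odd parts yields
\[
(uc+\gamma)_{\even}\circ(uc+\gamma)_{\odd} = u_{\odd}^2 + N_{\odd}, \qquad (uc+\gamma)_{\odd}\circ(uc+\gamma)_{\even} = u_{\even}^2 + N_{\even}.
\]
With respect to the grading $C_{\odd}=\bigoplus_k C_{2k+1}$ the operator $u_{\odd}^2+N_{\odd}$ is block triangular: its diagonal blocks are the $u_{2k+1}^2$, while $N_{\odd}$ strictly raises the degree and so contributes only off-diagonal blocks. Each $u_p$ is weakly invertible, hence so is $u_p^2$, so Lemma \ref{detlem}(\ref{detlem2}) applied repeatedly shows $u_{\odd}^2+N_{\odd}$ is weakly invertible with determinant $\prod_k \det_{S^{-1}\cdot\zz G}(u_{2k+1}^2) = \det_{S^{-1}\cdot\zz G}(u_{\odd})^2$, using Lemma \ref{detlem}(\ref{detlem1}); the even version is analogous. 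From the weak invertibility of both composites it follows that $(uc+\gamma)_{\odd}$ is injective, being the inner factor of the weak isomorphism $u_{\odd}^2+N_{\odd}$, and has dense image, being the outer factor of the weak isomorphism $u_{\even}^2+N_{\even}$; hence it is weakly invertible, and likewise for $(uc+\gamma)_{\even}$. The maps $u_{\odd},u_{\even}$ are weakly invertible as direct sums of the $u_p$.

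For the two displayed determinant equations I would apply Lemma \ref{detlem}(\ref{detlem1}) to the composites to get
\[
\det_{S^{-1}\cdot\zz G}((uc+\gamma)_{\odd})\cdot\det_{S^{-1}\cdot\zz G}((uc+\gamma)_{\even}) = \det_{S^{-1}\cdot\zz G}(u_{\odd})^2 = \det_{S^{-1}\cdot\zz G}(u_{\even})^2 .
\]
Since $G$ has semi-integral determinant, every determinant occurring here is $\geq 1$, in particular strictly positive; cancelling the squares gives $\det_{S^{-1}\cdot\zz G}(u_{\odd})=\det_{S^{-1}\cdot\zz G}(u_{\even})$, and taking logarithms of the relation above then yields the second displayed equation.

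The remaining, and genuinely hardest, point is independence of the weak chain contraction. Here I would pass to the ring $\mathcal{U}(G)$ of unbounded operators affiliated to $\mathcal{N}G$, over which $S^{-1}\cdot\zz G$ embeds, every weak isomorphism (in particular each $u_p$) becomes invertible, and the Fuglede--Kadison determinant is defined, multiplicative and trivial on unipotents. Setting $\bar\gamma := u^{-1}\gamma$ one checks, using $uc=cu$ and $c\gamma+\gamma c=u$, that $c\bar\gamma+\bar\gamma c=\id$, so $\bar\gamma$ is an ordinary chain contraction of $\mathcal{U}(G)\otimes C_*$. As $D=u(c+\bar\gamma)$ with $u$ degree-preserving, $(uc+\gamma)_{\odd}=u_{\even}\circ(c+\bar\gamma)_{\odd}$, whence, using $\det(u_{\odd})=\det(u_{\even})$,
\[
\ln\det_{S^{-1}\cdot\zz G}((uc+\gamma)_{\odd}) - \ln\det_{S^{-1}\cdot\zz G}(u_{\odd}) = \ln\det((c+\bar\gamma)_{\odd}),
\]
and the same holds for any second weak chain contraction. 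The right-hand side is the logarithm of the Reidemeister-type torsion of the contractible based $\mathcal{U}(G)$-complex computed from a chain contraction, and this is classically independent of the contraction --- over the von Neumann regular ring $\mathcal{U}(G)$ one may, for instance, split $C_n\cong\im c_{n+1}\oplus\im c_n$ and compute the torsion without reference to any contraction at all. I expect the main obstacle to be precisely this last step: verifying that the Fuglede--Kadison determinant over $\mathcal{U}(G)$ restricts to $\det_{S^{-1}\cdot\zz G}$ on weakly invertible matrices, and that the classical independence argument goes through over this ring, so that the quantity in the statement is indeed the same for every choice of $(\gamma_*,u_*)$.
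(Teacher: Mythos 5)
Your handling of weak invertibility and of the two displayed equations is correct, and it is genuinely different from (and for that part cleaner than) the paper's argument: the identity $(uc+\gamma)^2=u^2+\gamma^2$, valid because $cu=uc$, $\gamma u=u\gamma$ and $c\gamma+\gamma c=u$, exhibits both composites $(uc+\gamma)_{\even}\circ(uc+\gamma)_{\odd}$ and $(uc+\gamma)_{\odd}\circ(uc+\gamma)_{\even}$ as triangular matrices with diagonal blocks $u_p^2$, after which Lemma \ref{detlem} gives weak invertibility, $\det_{S^{-1}\cdot\zz G}(u_{\odd})^2=\det_{S^{-1}\cdot\zz G}(u_{\even})^2$, and the second displayed equation; cancelling the squares is legitimate because semi-integral determinant forces all these determinants to be $\geq 1$. (The paper instead obtains these statements as a by-product of its comparison of two contractions, specialized to $(\delta_*,v_*)=(\gamma_*,u_*)$.)

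The \emph{Moreover} clause, however, is where the real content of the lemma lies, and your proposal does not prove it. You reduce it, via $\bar\gamma:=u^{-1}\gamma$ over the affiliated algebra ${\mathcal U}(G)$, to the classical independence of torsion from the choice of chain contraction, but the properties you would need --- that a Fuglede--Kadison determinant on matrices over ${\mathcal U}(G)$ is multiplicative, vanishes on the relevant unipotents, and restricts to $\det_{S^{-1}\cdot\zz G}$ on weakly invertible matrices --- are exactly what is missing, and they are not routine: on $GL_n({\mathcal U}(G))$ such a determinant takes values in $[0,\infty]$ and fails to be multiplicative in general (an invertible affiliated operator can have determinant $0$ while its inverse has determinant $+\infty$), so the classical $K_1$-style argument does not transfer, and the proposed basis-free computation via splittings $C_n\cong\im c_{n+1}\oplus\im c_n$ runs into the same problem since those summands are projective but not based. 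Note also that your key identity cannot be adapted to compare two contractions directly: in $(vc+\delta)\circ(uc+\gamma)$ the cross terms $vc\gamma+\delta uc$ admit no simplification, because $\delta$ and $u$, respectively $v$ and $\gamma$, satisfy no commutation relation. The paper circumvents precisely this obstacle by inserting a middle factor: for two weak chain contractions $(\gamma_*,u_*)$, $(\delta_*,v_*)$ it forms $\theta:=v_*u_*+\delta_*\gamma_*:C_{\even}\to C_{\even}$, which is triangular with diagonal $vu$, and checks by a computation using only the defining relations that $(vc+\delta)_{\even}\circ\theta\circ(uc+\gamma)_{\odd}$ is triangular with diagonal $(v^2u^2)_{2n+1}$; Lemma \ref{detlem} then yields
\[
\ln\det((uc+\gamma)_{\odd})-\ln\det(u_{\odd})=-\ln\det((vc+\delta)_{\even})+\ln\det(v_{\even}),
\]
which is the independence statement. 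Supplying this comparison (or actually carrying out the ${\mathcal U}(G)$ program with proofs of the determinant properties) is necessary to complete your proof.
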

\begin{proof}
Let $(\delta_*,v_*)$ be any weak chain contraction for $C_*$. We define $\theta: C_{\even} \to C_{\even}$ by
\[
\theta := v_* \circ u_* + \delta_* \circ \gamma_* = \left( \begin{array}{ccccc} \ddots & \vdots & \vdots & \vdots & \\ \cdots & v u & 0 & 0 & \cdots \\ \cdots & \delta \gamma & v u & 0 & \cdots \\ \cdots & 0 & \delta \gamma & v u & \cdots \\ & \vdots & \vdots & \vdots & \ddots \end{array} \right).
\]
A direct calculation shows that the composition
\[
\theta' := (v c+\delta)_{\even} \circ \theta \circ (u c+\gamma)_{\odd}: C_{\odd} \to C_{\odd}
\]
is given by the lower triangle matrix
\[
\left( \begin{array}{ccccc} \ddots & \vdots & \vdots & \vdots & \\ \cdots & (v^2 u^2)_{2n-1} & 0 & 0 & \cdots \\ \cdots & * & (v^2 u^2)_{2n+1} & 0 & \cdots \\ \cdots & * & * & (v^2 u^2)_{2n+3} & \cdots \\ & \vdots & \vdots & \vdots & \ddots \end{array} \right).
\]
Since $u_p$ and $v_p$ are weakly invertible for all $p \in \zz$, the maps $u_{\odd}$, $u_{\even}$, $v_{\odd}$, $v_{\even}$, $\theta$, $\theta'$, $(u c+\gamma)_{\odd}$ and $(v c+\delta)_{\even}$ are weakly invertible. From Lemma \ref{detlem} we conclude
\begin{eqnarray*}
&& 2 \cdot \ln \det(u_{\odd}) + 2 \cdot \ln \det(v_{\odd}) = \ln \det(\theta') =\\
&& \ln \det((u c+\gamma)_{\odd}) + \ln \det(\theta) + \ln \det((v c+\delta)_{\even}) =\\
&& \ln \det((u c+\gamma)_{\odd}) + \ln \det(u_{\even}) + \ln \det(v_{\even}) + \ln \det((v c+\delta)_{\even}).
\end{eqnarray*}
The equations
\[
(u c+\gamma)_{\odd} \circ u_{\odd} = u_{\even} \circ (uc+\gamma)_{\odd}, \quad (v c+\delta)_{\even} \circ v_{\even} = v_{\odd} \circ (v c+\delta)_{\even}
\]
imply
\[
{\textstyle\det}_{S^{-1} \cdot \zz G}(u_{\odd}) = {\textstyle\det}_{S^{-1} \cdot \zz G}(u_{\even}), \quad {\textstyle\det}_{S^{-1} \cdot \zz G}(v_{\even}) = {\textstyle\det}_{S^{-1} \cdot \zz G}(v_{\odd}).
\]
Hence we obtain the equation
\[
\ln \det((u c+\gamma)_{\odd}) - \ln \det(u_{\odd}) = - \ln \det((v c+\delta)_{\even}) + \ln \det(v_{\even}).
\]
This shows that the number ${\textstyle\det}_{S^{-1} \cdot \zz G}((uc+\gamma)_{\odd}) - {\textstyle\det}_{S^{-1} \cdot \zz G}(u_{\odd})$ does not depend on the choice of the weak chain contraction.
We obtain
\[
\ln \det((u c+\gamma)_{\odd}) - \ln \det(u_{\odd}) = -\ln \det((u c+\gamma)_{\even}) + \ln \det(u_{\even})
\]
by setting $(\delta_*,v_*) := (\gamma_*,u_*)$.
\end{proof}

Now we return to the situation described in Example \ref{wcc-ex}.

\begin{lemma} \label{wcc-lem2}
Let $X$ be a finite CW-complex such that $\pi_1(X)$ has semi-integral determinant and $b_n^{(2)}({\tilde X})=0$ for all $n$.
Let $S \subset \zz \pi_1(X)$ be a subset with determinant localization property and $(\gamma_*, u_*)$ a weak chain contraction for $S^{-1} \cdot C_*({\tilde X})$.
Then 
\[
\rho^{(2)}({\tilde X})= \ln {\textstyle\det}_{S^{-1} \cdot \zz G}((u c({\tilde X})+\gamma)_{\odd}) - \ln {\textstyle\det}_{S^{-1} \cdot \zz G}(u_{\odd}).
\]
\end{lemma}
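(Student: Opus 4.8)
The plan is to use the freedom in the choice of weak chain contraction to reduce to the canonical one of Example \ref{wcc-ex}, and then to evaluate the two determinants explicitly by means of the Hodge decomposition of the $L^2$-acyclic complex $C^{(2)}_*({\tilde X})$. First I would invoke Lemma \ref{wcc-lem1}: the quantity $\ln {\textstyle\det}_{S^{-1} \cdot \zz G}((uc+\gamma)_{\odd}) - \ln {\textstyle\det}_{S^{-1} \cdot \zz G}(u_{\odd})$ does not depend on the chosen weak chain contraction, so it suffices to verify the formula for the specific contraction $\gamma_n = c_{n+1}({\tilde X})^*$, $u_n = \Delta_n$ of Example \ref{wcc-ex}. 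Since the involution $g \mapsto g^{-1}$ preserves $\zz G$, the adjoints $c_{n+1}^*$ and hence the Laplacians $\Delta_n$ are genuine matrices over $\zz G \subset S^{-1} \cdot \zz G$; consequently $\det_{S^{-1} \cdot \zz G}$ of each of the maps below agrees with the von Neumann determinant $\det_{{\mathcal N}G}$ of the associated operator (take $s_1 = s_2 = 1$ in the definition).

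Next I would exploit $L^2$-acyclicity. Because $b_n^{(2)}({\tilde X}) = 0$ and $\dim_{{\mathcal N}G}$ is faithful, the harmonic part $H_n^{(2)}({\tilde X}) = \ker(\Delta_n)$ vanishes, so each $\Delta_n$ is a weak isomorphism and one has the $G$-equivariant orthogonal Hodge decomposition
\[
C_n^{(2)}({\tilde X}) = \overline{\im(c_{n+1}^{(2)})} \oplus \overline{\im((c_n^{(2)})^*)}.
\]
A direct inspection shows that, with respect to this decomposition, $(uc+\gamma)_{\odd}$ is block-diagonal: on $\overline{\im((c_{2k+1}^{(2)})^*)}$ the part $\gamma = c_{2k+2}^*$ vanishes (by $c^{(2)}_{2k+1} c^{(2)}_{2k+2} = 0$ and its adjoint) while $\Delta_{2k} c_{2k+1}$ restricts to a weak isomorphism onto $\overline{\im(c_{2k+1}^{(2)})}$, and on $\overline{\im(c_{2k+2}^{(2)})}$ the part $\Delta c$ vanishes while $c_{2k+2}^*$ restricts to a weak isomorphism onto $\overline{\im((c_{2k+2}^{(2)})^*)}$. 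Likewise $u_{\odd} = \bigoplus_k \Delta_{2k+1}$ is block-diagonal, acting as $(c_{2k+1}^{(2)})^* c_{2k+1}^{(2)}$ on the second summand of $C_{2k+1}^{(2)}({\tilde X})$ and as $c_{2k+2}^{(2)} (c_{2k+2}^{(2)})^*$ on the first.

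Finally I would compute the determinants using multiplicativity (Lemma \ref{detlem}) together with the standard identities ${\textstyle\det}_{{\mathcal N}G}(f^*) = {\textstyle\det}_{{\mathcal N}G}(f)$ and ${\textstyle\det}_{{\mathcal N}G}(f^* f) = {\textstyle\det}_{{\mathcal N}G}(f)^2$ from \cite[Lemma 9.7]{Luc02}. Writing $\ell_n := \ln {\textstyle\det}_{{\mathcal N}G}(c_n^{(2)}({\tilde X}))$, the block computation yields $\ln {\textstyle\det}((uc+\gamma)_{\odd}) = \sum_k (3\ell_{2k+1} + \ell_{2k+2})$ and $\ln {\textstyle\det}(u_{\odd}) = \sum_k (2\ell_{2k+1} + 2\ell_{2k+2})$; subtracting gives $\sum_{n \ \odd} \ell_n - \sum_{n \ \even} \ell_n = -\sum_n (-1)^n \ell_n$, which is exactly $\rho^{(2)}({\tilde X})$.

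The hard part will be making the block-diagonal reduction fully rigorous over the von Neumann algebra. One must justify that the Hodge decomposition is genuinely respected, that $\det_{{\mathcal N}G}$ of a $G$-equivariant orthogonal direct sum of weak isomorphisms is the product of the determinants of the summands, and that the determinant of each restricted map (for instance $c_{2k+1}^{(2)}$ viewed as a weak isomorphism $\overline{\im((c_{2k+1}^{(2)})^*)} \to \overline{\im(c_{2k+1}^{(2)})}$) coincides with $\det_{{\mathcal N}G}(c_{2k+1}^{(2)})$, the von Neumann determinant being insensitive to the kernel. A secondary point requiring care is the precise verification that the off-diagonal blocks vanish, which rests on the relation $c^{(2)}_n c^{(2)}_{n+1} = 0$ and its adjoint together with the compatibility $\gamma u = u \gamma$.
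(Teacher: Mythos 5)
Your proposal is correct, and its first step coincides with the paper's: both invoke Lemma \ref{wcc-lem1} to reduce to the canonical contraction $\gamma_n = c_{n+1}({\tilde X})^*$, $u_n = \Delta_n$ of Example \ref{wcc-ex}, whose matrices have entries in $\zz G$, so that $\det_{S^{-1}\cdot\zz G}$ is literally $\det_{{\mathcal N}G}$ of the induced operator. After that the two arguments genuinely diverge. The paper never leaves the matrix calculus over $S^{-1}\cdot\zz G$: starting from the Laplace-operator formula $\rho^{(2)}({\tilde X}) = -\tfrac12\sum_n(-1)^n\, n\,\ln\det_{{\mathcal N}G}(\Delta_n)$, it forms the powers $f_n := u_n^n$, uses the intertwining relation $f_{\even}\circ(uc+c^*)_{\odd} = (uc^*+c)_{\odd}\circ f_{\odd}$ to obtain $\rho^{(2)}({\tilde X}) = \tfrac12\ln\det((uc+c^*)_{\odd}) - \tfrac12\ln\det((uc^*+c)_{\odd})$, and then applies Lemma \ref{wcc-lem1} a second time, to the dual complex $D_n := C_{-n}({\tilde X})$, $d_n := c_{-n+1}({\tilde X})^*$ with weak chain contraction $(c_{-*},u_{-*})$, to convert $\ln\det((uc^*+c)_{\odd})$ into $-\ln\det((uc+c^*)_{\odd}) + 2\ln\det(u_{\odd})$; only Lemma \ref{detlem} is needed throughout. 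You instead descend to the Hilbert-module level, diagonalizing via the Hodge decomposition $C^{(2)}_n({\tilde X}) = \overline{\im(c^{(2)}_{n+1})}\oplus\overline{\im((c^{(2)}_n)^*)}$ (valid since $b_n^{(2)}=0$ and $\dim_{{\mathcal N}G}$ is faithful) and evaluating against the definition $\rho^{(2)}({\tilde X}) = -\sum_n(-1)^n\ell_n$ with $\ell_n = \ln\det_{{\mathcal N}G}(c^{(2)}_n)$; your block identification and bookkeeping check out (the odd blocks of $(uc+\gamma)_{\odd}$ are $c_{2k+1}c_{2k+1}^*c_{2k+1}$ and $c_{2k+2}^*$ restricted, giving $3\ell_{2k+1}+\ell_{2k+2}$, against $2\ell_{2k+1}+2\ell_{2k+2}$ for $u_{\odd}$). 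The cost of your route is exactly what you flag: the blocks are morphisms of finitely generated Hilbert ${\mathcal N}G$-modules that are not free, so Lemma \ref{detlem} no longer applies and you need Fuglede--Kadison determinant properties for such morphisms --- additivity of spectral density functions under orthogonal direct sums, insensitivity to kernels, multiplicativity for weak isomorphisms, and $\det_{{\mathcal N}G}(f^*)=\det_{{\mathcal N}G}(f)$, $\det_{{\mathcal N}G}(f^*f)=\det_{{\mathcal N}G}(f)^2$ --- all of which are available in \cite[Lemma 9.7]{Luc02}, so the gaps are fillable by citation rather than genuine obstacles; note also that your use of $\det_{{\mathcal N}G}(f^*)=\det_{{\mathcal N}G}(f)$ plays precisely the role of the paper's dual-complex trick. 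In exchange your argument is more transparent, exhibiting the right-hand side directly as the alternating sum defining $\rho^{(2)}$, whereas the paper's stays self-contained within the localized-matrix framework it has already set up.
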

\begin{proof}
Because of Lemma \ref{wcc-lem1} it suffices to prove the equation for the weak chain contraction $(\gamma_*, u_*)$ given by
\[
\gamma_n := c_{n+1}({\tilde X})^* \mbox{ and } u_n := c_n({\tilde X})^* \circ c_n({\tilde X}) + c_{n+1}({\tilde X}) \circ c_{n+1}({\tilde X})^*.
\]
Let $f_n: S^{-1} \cdot C_n({\tilde X}) \to S^{-1} \cdot C_n({\tilde X})$ by the $n$-fold composition $u_n \circ \ldots \circ u_n$. 
We write $C_{\odd} := \oplus_{n \in \zz} S^{-1} \cdot C({\tilde X})_{2n+1}$, $C_{\even} := \oplus_{n \in \zz} S^{-1} \cdot C({\tilde X})_{2n}$.
Notice that
\[
f_{\even} \circ (u c({\tilde X}) + c({\tilde X})^*)_{\odd} = (u c({\tilde X})^* + c({\tilde X}))_{\odd} \circ f_{\odd}.
\]
We conclude 
\begin{eqnarray*}
\rho^{(2)}({\tilde X}) & = & - \frac{1}{2} \cdot \sum_{n \geq 0} (-1)^n \cdot n \cdot \ln {\textstyle\det}_{S^{-1} \cdot \zz \pi_1(X)}(u_n)\\
& = & \frac{1}{2} \cdot \ln {\textstyle\det}_{S^{-1} \cdot \zz \pi_1(X)}(f_{\odd}) - \frac{1}{2} \cdot \ln {\textstyle\det}_{S^{-1} \cdot \zz \pi_1(X)}(f_{\even})\\
& = & \frac{1}{2} \cdot \ln \det((u c({\tilde X}) + c({\tilde X})^*)_{\odd}) - \frac{1}{2} \cdot \ln \det((u c({\tilde X})^* + c({\tilde X}))_{\odd}).
\end{eqnarray*}
Hence it remains to prove
\[
\ln \det((u c({\tilde X})^* + c({\tilde X}))_{\odd}) = -\ln \det((u c({\tilde X}) + c({\tilde X})^*)_{\odd}) + 2 \cdot \ln \det(u_{\odd}).
\]
Consider the dual chain complex $D_*$ given by $D_n := C_{-n}({\tilde X})$ and $d_n := c_{-n+1}({\tilde X})^*$.
It has the weak chain contraction $(c_{-*}({\tilde X}), u_{-*})$. From Lemma \ref{wcc-lem1} applied to $D_*$ we conclude that
\begin{eqnarray*}
\ln \det((u  c({\tilde X})^*+c({\tilde X}))_{\odd}) & = & -\ln \det((u c({\tilde X})^*+c({\tilde X}))_{\even}) + 2 \cdot \ln \det(u_{\odd})\\
& = & -\ln \det((u c({\tilde X})+c({\tilde X})^*)_{\odd}) + 2 \cdot \ln \det(u_{\odd})
\end{eqnarray*}
This finishes the proof of Lemma \ref{wcc-lem2}.
\end{proof}

\begin{theorem} \label{rhovan}
Let $X$ be a finite CW-complex. Let $S \subset \zz \pi_1(X)$ be a subset with determinant localization property satisfying
\[
S^{-1} \cdot H_n({\tilde X}) := S^{-1} \cdot \zz \pi_1(X) \otimes_{\zz \pi_1(X)} H_n({\tilde X}) = 0 \mbox{ for all } n \geq 0.
\]
Then $b_n^{(2)}({\tilde X})=0$ for all $n \geq 0$. Moreover, if $\pi_1(X)$ has semi-integral determinant then $\rho^{(2)}({\tilde X})=0$.
\end{theorem}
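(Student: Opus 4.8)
The plan is to treat the two assertions separately: the vanishing of the $L^2$-Betti numbers follows purely from the acyclicity of the localized chain complex, while the vanishing of the $L^2$-torsion exploits the weak chain contraction machinery of Lemma \ref{wcc-lem2} together with the semi-integral determinant hypothesis.

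First I would record that, since $(\zz\pi_1(X),S)$ satisfies the Ore condition, the localization $S^{-1}\cdot\zz\pi_1(X)$ is flat over $\zz\pi_1(X)$, so localization commutes with homology and $H_n(S^{-1}\cdot C_*({\tilde X})) \cong S^{-1}\cdot H_n({\tilde X}) = 0$ for all $n$. Thus the finite free complex $S^{-1}\cdot C_*({\tilde X})$ is acyclic and hence chain contractible; I fix a chain contraction $\delta_*$ satisfying $c_{*+1}({\tilde X})\circ\delta_* + \delta_{*-1}\circ c_*({\tilde X}) = \id$. For the Betti numbers the key point is that each $s \in S$ induces a weak isomorphism $r_s^{(2)}$ and therefore becomes invertible in the algebra $\mathcal{U}(\pi_1(X))$ of operators affiliated to $\mathcal{N}\pi_1(X)$; by the universal property of the Ore localization this yields a ring homomorphism $S^{-1}\cdot\zz\pi_1(X) \to \mathcal{U}(\pi_1(X))$. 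Base-changing the contractible complex $S^{-1}\cdot C_*({\tilde X})$ along this map keeps it contractible, so $\mathcal{U}(\pi_1(X)) \otimes_{\zz\pi_1(X)} C_*({\tilde X})$ is acyclic. Since $b_n^{(2)}({\tilde X})$ equals the $\mathcal{U}(\pi_1(X))$-dimension of the $n$-th homology of this complex (a standard consequence of the flatness of $\mathcal{U}(\pi_1(X))$ over $\mathcal{N}\pi_1(X)$ and L\"uck's dimension theory, which identifies reduced and algebraic homology), we conclude $b_n^{(2)}({\tilde X}) = 0$ for all $n$.

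For the torsion I now assume that $\pi_1(X)$ has semi-integral determinant. The chain contraction $\delta_*$ furnishes a weak chain contraction $(\gamma_*,u_*) := (\delta_*,\id)$ of $S^{-1}\cdot C_*({\tilde X})$: the identity is a chain map, is weakly invertible with determinant $1$, and trivially commutes with $\delta_*$. Applying Lemma \ref{wcc-lem2} (legitimate because $b_n^{(2)}({\tilde X}) = 0$ has just been established) gives
\[
\rho^{(2)}({\tilde X}) = \ln{\textstyle\det}_{S^{-1} \cdot \zz\pi_1(X)}((c({\tilde X})+\delta)_{\odd}) - \ln{\textstyle\det}_{S^{-1} \cdot \zz\pi_1(X)}(\id_{\odd}).
\]
The second term is $\ln 1 = 0$. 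For the first, the identities $c({\tilde X})\delta + \delta c({\tilde X}) = \id$ and $c({\tilde X})^2 = 0$ yield $(c({\tilde X})+\delta)^2 = \id + \delta^2$ on the total module $\bigoplus_n S^{-1}\cdot C_n({\tilde X})$; as $\delta$ is nilpotent on this bounded complex, $\id + \delta^2$ is invertible, and the off-diagonal block form of $c({\tilde X})+\delta$ forces $(c({\tilde X})+\delta)_{\odd}: C_{\odd} \to C_{\even}$ to be an isomorphism over $S^{-1}\cdot\zz\pi_1(X)$. By the remark following Lemma \ref{detlem}, an invertible matrix over $S^{-1}\cdot\zz\pi_1(X)$ has determinant $1$ when $\pi_1(X)$ has semi-integral determinant, so the first term vanishes as well and $\rho^{(2)}({\tilde X}) = 0$.

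The main obstacle is the Betti-number step: translating the purely algebraic acyclicity of $S^{-1}\cdot C_*({\tilde X})$ into the analytic statement $b_n^{(2)}({\tilde X}) = 0$ requires the bridge to the affiliated operators $\mathcal{U}(\pi_1(X))$ and the identification of $b_n^{(2)}$ with a $\mathcal{U}(\pi_1(X))$-dimension. Once the Betti numbers vanish, the torsion computation is essentially formal, since everything reduces to the determinant of the single invertible folding matrix $(c({\tilde X})+\delta)_{\odd}$, which the semi-integral determinant hypothesis pins down to be $1$.
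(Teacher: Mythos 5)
Your proposal is correct and follows essentially the same route as the paper: flatness of the Ore localization gives acyclicity of $S^{-1}\cdot C_*({\tilde X})$, passing to the affiliated operators ${\mathcal U}\pi_1(X)$ and Reich's dimension function kills the $L^2$-Betti numbers, and then the weak chain contraction $(\delta_*,\id)$ together with Lemma \ref{wcc-lem2} reduces the torsion to $\ln\det_{S^{-1}\cdot\zz\pi_1(X)}((c({\tilde X})+\delta)_{\odd})$, which vanishes because that map is invertible and invertible matrices have determinant $1$ under the semi-integral determinant hypothesis. Your nilpotency argument for invertibility of $(c({\tilde X})+\delta)_{\odd}$ is just a repackaging of the paper's observation that $(c({\tilde X})+\delta)_{\even}\circ(c({\tilde X})+\delta)_{\odd}$ is triangular with identity diagonal.
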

\begin{proof}
At first, we show $b_n^{(2)}({\tilde X})=0$ for all $n \geq 0$. Denote by ${\mathcal U}\pi_1(X)$ the algebra of operators affiliated to the group von Neumann algebra ${\mathcal N}\pi_1(X)$. It is the Ore localization of ${\mathcal N}\pi_1(X)$ with respect to all non-zero-divisors (see \cite[Proposition 2.8]{Rei99}). Hence there is an embedding $S^{-1} \cdot \zz \pi_1(X) \subset {\mathcal U}\pi_1(X)$. Since $S^{-1} \cdot \zz \pi_1(X)$ is a direct limit of free $\zz \pi_1(X)$-modules, $S^{-1} \cdot \zz \pi_1(X)$ is a flat right $\zz \pi_1(X)$-module. Hence
\[
H_n(S^{-1} \cdot C_*({\tilde X})) = S^{-1} \cdot H_n(C_*({\tilde X})) = S^{-1} \cdot H_n({\tilde X}) = 0.
\]
We conclude that $S^{-1} \cdot C_*({\tilde X})$ and hence ${\mathcal U}\pi_1(X) \otimes_{\zz \pi_1(X)} C_*({\tilde X})$ are contractible. Therefore, we obtain $H_n({\mathcal U}\pi_1(X) \otimes_{\zz \pi_1(X)} C_*({\tilde X})) = 0$. There exists a dimension function $\dim_{{\mathcal U}\pi_1(X)}$ for ${\mathcal U}\pi_1(X)$-modules satisfying
\[
b_n^{(2)}({\tilde X}) = \dim_{{\mathcal U}\pi_1(X)}(H_n({\mathcal U}\pi_1(X) \otimes_{\zz \pi_1(X)} C_*({\tilde X})))
\]
(see \cite[Proposition 4.2 (ii)]{Rei99}). This shows
\[
b_n^{(2)}({\tilde X}) = \dim_{{\mathcal U}\pi_1(X)}(H_n({\mathcal U}\pi_1(X) \otimes_{\zz \pi_1(X)} C_*({\tilde X}))) = 0.
\]
It remains to prove $\rho^{(2)}({\tilde X})=0$ under the assumption that $\pi_1(X)$ has semi-integral determinant. Since $S^{-1} \cdot C_*({\tilde X})$ is finite free acyclic, there exists a chain contraction $\delta_*$. Hence $(\delta_*,\id_*)$ is a weak chain contraction for $S^{-1} \cdot C_*({\tilde X})$.
From Lemma \ref{wcc-lem2} we conclude
\[
\rho^{(2)}({\tilde X})= \ln {\textstyle\det}_{S^{-1} \cdot \zz G}((c({\tilde X})+\delta)_{\odd}).
\]
Since $(c({\tilde X})+\delta)_{\even} \circ (c({\tilde X})+\delta)_{\odd}$ is given by a lower triangle matrix with the identity on the diagonal, we conclude that $(c({\tilde X})+\delta)_{\odd}$ is invertible. This shows $\det_{S^{-1} \cdot \zz G}((c({\tilde X})+\delta)_{\odd}) = 1$.
\end{proof}

Let $S_i \subset \zz G$ $(i \in I)$ be subsets with determinant localization property. Then $\langle S_i \mid i \in I \rangle \subset \zz G$, the multiplicative closed subset generated by all elements lying in any subset $S_i$, satisfies the left and right Ore condition by the following argument. Let $s \in \langle S_i \mid i \in I \rangle$ and $t \in \zz G$. The element $s$ can be written as $s = s_1 s_2 \cdots s_n$ with $s_k \in S_{i_k}$ for some $i_k \in I$. From the Ore condition for $(\zz G, S_{i_1})$ we conclude that there exist elements $s'_1 \in S_{i_1}$ and $t_1 \in \zz G$ with $s_1 t_1 = t s'_1$. Moreover, there exist elements $s'_k \in S_{i_k}$ and $t_k \in \zz G$ $(t = 2, \ldots, n)$ with $s_k t_k = t_{k-1} s'_k$ for all $2 \leq k \leq n$. We finally get $s t_n = t s'_1 s'_2 \cdots s'_n$. This shows $t \cdot \langle S_i \mid i \in I \rangle \, \cap \, s \cdot \zz G \neq \emptyset$. Analogously, one proves $\langle S_i \mid i \in I \rangle \cdot t \, \cap \, \zz G \cdot s \neq \emptyset$.
It follows that the subset $\langle S_i \mid i \in I \rangle \subset \zz G$ has again the determinant localization property.

In particular, we get the following result: For any group $G$ there exists a maximal subset $S_{max}(G) \subset \zz G$ with determinant localization property. It is the multiplicative closed subset generated by all elements lying in subsets with determinant localization property.

\begin{proposition} \label{propvan}
Let $H$ be a normal subgroup of $G$ and $M$ be a $\zz G$-module. If $S_{max}(H)^{-1} \cdot M = 0$ (where we consider $M$ as $\zz H$-module) then
\[
S_{max}(G)^{-1} \cdot M = 0.
\]
\end{proposition}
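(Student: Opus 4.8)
The plan is to reduce the whole statement to the single inclusion $S_{max}(H) \subseteq S_{max}(G)$, where we regard $S_{max}(H) \subset \zz H$ as a subset of $\zz G$ via $\zz H \subseteq \zz G$. Granting this inclusion, the proposition is immediate. Since $S_{max}(H)$ is an Ore set of non-zero-divisors, the localization $S_{max}(H)^{-1} \cdot M$ vanishes precisely when every $m \in M$ is $S_{max}(H)$-torsion, i.e. $sm=0$ for some $s \in S_{max}(H)$. If $S_{max}(H) \subseteq S_{max}(G)$, these same elements $s$ witness that every $m \in M$ is $S_{max}(G)$-torsion, and hence $S_{max}(G)^{-1} \cdot M = 0$. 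By the maximality of $S_{max}(G)$, it therefore suffices to show that $S_{max}(H)$, viewed inside $\zz G$, itself has the determinant localization property.

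The one point where normality of $H$ is used is the following observation, which I would record first. For every $g \in G$ the map $h \mapsto g h g^{-1}$ is an automorphism of $H$ (this needs $H \lhd G$) and induces a ring automorphism $c_g$ of $\zz H$ together with a trace-preserving unitary of $l^2(H)$. Such an automorphism sends any subset with the determinant localization property to another one, since the Ore condition is transported by a ring automorphism, while weak isomorphism and $\det_{{\mathcal N}H}$ are preserved under conjugation by a trace-preserving unitary. Hence $c_g$ permutes these subsets bijectively and fixes the maximal one, so $g^{-1} s g \in S_{max}(H)$ for all $s \in S_{max}(H)$ and $g \in G$.

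I expect the Ore condition for $(\zz G, S_{max}(H))$ to be the main obstacle, and it is exactly here that the conjugation invariance enters. For a single $g \in G$ and $s \in S_{max}(H)$ one has $g\,(g^{-1} s g) = s g$ and $(g s g^{-1})\,g = g s$, where $g^{-1} s g$ and $g s g^{-1}$ again lie in $S_{max}(H)$. To treat an arbitrary $t = \sum_i a_i g_i \in \zz G$ (finite sum, $a_i \in \zz$), I would pass from the finitely many conjugates $g_i^{-1} s g_i$ to a common right multiple $s'' \in S_{max}(H)$, written $s'' = (g_i^{-1} s g_i)\,u_i$ with $u_i \in \zz H$; such a common multiple can be found inside $S_{max}(H)$ because it is an Ore set of $\zz H$ (for $s_1,s_2\in S_{max}(H)$ the relation $s_1 S_{max}(H) \cap s_2 \zz H \neq \emptyset$ produces $s_1 s' = s_2 r$ with $s_1 s' \in S_{max}(H)$). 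Then $g_i s'' = s\,(g_i u_i)$, and summing yields $t s'' = s r'$ with $r' = \sum_i a_i g_i u_i \in \zz G$, establishing $t \cdot S_{max}(H) \cap s \cdot \zz G \neq \emptyset$; the left Ore condition follows symmetrically using left common multiples of the conjugates $g_i s g_i^{-1}$. That $S_{max}(H)$ consists of non-zero-divisors in $\zz G$ is then routine: decomposing $\zz G$ into left, respectively right, cosets of $H$, multiplication by $s$ preserves each coset and acts there as multiplication by $s$ on $\zz H$, which is injective since $s$ is a non-zero-divisor in $\zz H$.

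It remains to verify the determinant condition, which I expect to be routine given the standard induction behaviour of $L^2$-invariants. For $s \in S_{max}(H)$ the operator $r_s^{(2)}: l^2(G) \to l^2(G)$ is the induction along $H \hookrightarrow G$ of $r_s^{(2)}: l^2(H) \to l^2(H)$, and induction preserves the von Neumann dimensions of kernel and cokernel as well as the Fuglede--Kadison determinant (see \cite{Luc02}). Hence $r_s^{(2)}$ is a weak isomorphism on $l^2(G)$ with $\det_{{\mathcal N}G}(r_s^{(2)}) = \det_{{\mathcal N}H}(r_s^{(2)}) = 1$. This shows that $S_{max}(H)$ has the determinant localization property as a subset of $\zz G$, yields $S_{max}(H) \subseteq S_{max}(G)$, and completes the argument.
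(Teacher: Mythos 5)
Your proof is correct and follows essentially the same route as the paper: reduce everything to showing that $S_{max}(H)$, viewed inside $\zz G$, has the determinant localization property (hence $S_{max}(H) \subseteq S_{max}(G)$), with normality entering only through the conjugation-invariance $g^{-1}S_{max}(H)g = S_{max}(H)$, and the Ore condition for $(\zz G, S_{max}(H))$ obtained by the same conjugate-and-recombine manipulation (you use common multiples inside $S_{max}(H)$ where the paper uses common denominators in $S_{max}(H)^{-1}\zz H$, a cosmetic difference). You are in fact somewhat more complete than the paper, which verifies only the Ore intersection conditions and leaves implicit the non-zero-divisor check and the preservation of weak isomorphism and Fuglede--Kadison determinant under induction from $l^2(H)$ to $l^2(G)$.
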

\begin{proof}
We will prove that $S_{max}(H) \subset \zz G$ has the determinant localization property. This implies $S_{max}(H) \subset S_{max}(G)$ and hence the statement follows.
Let $t \in \zz G$ and $s \in S_{max}(H)$. We can write $t = \sum_{i=1}^n t_i g_i$ with $t_i \in \zz H$ and $g_i \in G$. Since the subsets $g_i S_{max}(H) g_i^{-1} \in \zz H$ have the determinant localization property, we conclude $g_i S_{max}(H) g_i^{-1} \subset S_{max}(H)$. Therefore, $t_i (g_i s g_i^{-1})^{-1} \in S_{max}(H)^{-1} \cdot \zz H$ and we can choose $s' \in S_{max}(H)$ such that $s' t_i (g_i s g_i^{-1})^{-1} \in \zz H$ for all $i = 1, \ldots, n$. Define $f_i := s' t_i (g_i s g_i^{-1})^{-1} \in \zz H$. We finally get
\[
s' t = \sum_{i=1}^n s' t_i g_i = (\sum_{i=1}^n f_i g_i) s.
\]
This shows $S_{max}(H) \cdot t \, \cap \, \zz G \cdot s \neq \emptyset$. Analogously, we obtain $t \cdot S_{max}(H) \, \cap \, s \cdot \zz G \neq \emptyset$.
\end{proof}

\begin{theorem} \label{rhomain}
Let $X$ be a finite aspherical CW-complex such that the fundamental group $\pi_1(X)$ possesses a subnormal series
\[
\pi_1(X) \rhd G_m \rhd \ldots \rhd G_0
\]
with a non-trivial elementary amenable group $G_0$. Then
\[
b_n^{(2)}({\tilde X})=0 \mbox{ for all } n \geq 0.
\]
Moreover, if $\pi_1(X)$ has semi-integral determinant then $\rho^{(2)}({\tilde X})=0$.
\end{theorem}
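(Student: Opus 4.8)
The plan is to exhibit a single multiplicative subset $S \subset \zz\pi_1(X)$ with the determinant localization property for which $S^{-1} \cdot H_n(\tilde X) = 0$ for all $n$, and then to quote Theorem \ref{rhovan}. The natural candidate is $S := S_{max}(\pi_1(X))$. Since $X$ is aspherical, $\tilde X$ is contractible, so $H_n(\tilde X)$ vanishes for $n \geq 1$ while $H_0(\tilde X) = \zz$ is the trivial $\zz\pi_1(X)$-module. Hence the whole theorem reduces to the single assertion $S_{max}(\pi_1(X))^{-1} \cdot \zz = 0$.

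First I would settle the base case for a non-trivial torsionfree abelian group $A$, namely $S_{max}(A)^{-1} \cdot \zz = 0$. Choose a non-trivial $a \in A$; it has infinite order, $\langle a \rangle \cong \zz$, and $s := a - 1 \in \zz A$ is a non-zero divisor. I claim the multiplicative set generated by $s$ has the determinant localization property. The Ore condition is automatic, since $\zz A$ is commutative (in fact an integral domain, $A$ being orderable) and $s$ is a non-zero divisor. For the analytic condition it suffices, by multiplicativity of the determinant and closure of weak isomorphisms under composition, to treat $s$ itself: under the Pontryagin isomorphism $l^2(A) \cong L^2(\hat A)$ the operator $r_s^{(2)}$ becomes multiplication by the function $\chi \mapsto \chi(a) - 1$, which vanishes only on the measure-zero set where $\chi(a) = 1$ (here the infinite order of $a$ is essential), so $r_s^{(2)}$ is injective with dense image, i.e. a weak isomorphism; and its determinant equals $\det_{{\mathcal N}\zz}(r_{t-1}^{(2)}) = \exp(\int_0^1 \ln|e^{2\pi i\theta}-1|\,d\theta) = 1$ by restriction to $\langle a \rangle \cong \zz$, the logarithmic Mahler measure of $z-1$ being zero. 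Thus $s \in S_{max}(A)$. Since $a$ acts trivially on $\zz$, the element $s = a-1$ acts as zero, so inverting it forces $S_{max}(A)^{-1} \cdot \zz = 0$.

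Next I would climb the subnormal series. Writing $G_{m+1} := \pi_1(X)$ so that $G_0 \lhd G_1$ in every case, I apply the Hillman--Linnell result (Proposition \ref{HL}) to the successor $G_1$ of $G_0$: the group $G_1$ has finite cohomological dimension, being a subgroup of $\pi_1(X)$, which carries the finite aspherical model $X$ as a $K(\pi,1)$, and it contains the non-trivial elementary amenable normal subgroup $G_0$. Therefore $G_1$ possesses a non-trivial torsionfree abelian normal subgroup $A \lhd G_1$. By the base case, $S_{max}(A)^{-1} \cdot \zz = 0$. Now Proposition \ref{propvan}, applied repeatedly along $A \lhd G_1 \lhd G_2 \lhd \cdots \lhd G_{m+1} = \pi_1(X)$ with $M$ the trivial module $\zz$ at every stage, propagates the vanishing upward and yields $S_{max}(\pi_1(X))^{-1} \cdot \zz = 0$.

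Finally, with $S := S_{max}(\pi_1(X))$ we have $S^{-1} \cdot H_n(\tilde X) = 0$ for all $n$, so Theorem \ref{rhovan} gives $b_n^{(2)}(\tilde X) = 0$ for all $n$, and, under the hypothesis that $\pi_1(X)$ has semi-integral determinant, also $\rho^{(2)}(\tilde X) = 0$. The main obstacle is the base case, and within it the determinant computation $\det_{{\mathcal N}A}(r_{a-1}^{(2)}) = 1$: the Ore condition and the weak-isomorphism statement are routine in the abelian setting, but the determinant evaluation is the crux and depends on reducing to $\langle a \rangle \cong \zz$ and computing the Mahler measure of $z-1$.
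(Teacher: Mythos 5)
Your proposal is correct and follows essentially the same route as the paper: reduce via Theorem \ref{rhovan} to the single claim $S_{max}(\pi_1(X))^{-1} \cdot \zz = 0$, establish a base case resting on $\det_{{\mathcal N}\zz}(r_{z-1}^{(2)})=1$, invoke Proposition \ref{HL}, and climb the subnormal series by repeated application of Proposition \ref{propvan}. The only cosmetic differences are that the paper anchors the climb at an infinite cyclic subgroup $\zz \lhd A \lhd G_0$ (applying Hillman--Linnell to $G_0$ itself and citing L\"uck's Example 1.8 for the determinant), whereas you apply Hillman--Linnell to $G_1$ and verify the base case for an arbitrary non-trivial torsionfree abelian $A$ directly via Pontryagin duality, which inlines the same Mahler-measure computation for $z-1$.
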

\begin{proof}
Since $X$ is aspherical, we get $H_n({\tilde X})=0$ for $n \neq 0$ and $H_0({\tilde X}) = \zz$ with the trivial $\pi_1(X)$-operation. Because of Theorem \ref{rhovan} it suffices to show $S_{max}(\pi_1(X))^{-1} \cdot \zz = 0$.
We will first prove $S_{max}(\zz)^{-1} \cdot \zz = 0$. Let $z$ be a generator of the group $\zz$. Since $r_{z-1}^{(2)}: l^2(\zz) \to l^2(\zz)$ is a weak isomorphism with $\det_{{\mathcal N}\zz}(r_{z-1}^{(2)}) = 1$ (compare \cite[Example 1.8]{Luc02}), the subset
\[
\{ (z-1)^n \mid n \geq 0 \} \subset \zz [\zz]
\]
has the determinant localization property. Therefore, $z-1 \in S_{max}(\zz)$. Let $a \otimes b \in S_{max}(\zz)^{-1} \cdot \zz [\zz] \otimes_{\zz [\zz]} \zz$. We conclude
\[
a \otimes b = a (z-1)^{-1} \otimes (z-1) b = a (z-1)^{-1} \otimes 0 = 0.
\]
This shows $S_{max}(\zz)^{-1} \cdot \zz = 0$.
Since $G_0$ has a non-trivial torsionfree abelian normal subgroup (see Proposition \ref{HL}) and any non-trivial torsionfree abelian group has a normal subgroup isomorphic to $\zz$, Proposition \ref{propvan} implies $S_{max}(\pi_1(X))^{-1} \cdot \zz = 0$.
\end{proof}

There exists an alternative proof for the vanishing of the $L^2$-Betti numbers. From \cite[Theorem 10.12]{Luc02}) it follows that $b_n^{(2)}({\tilde X})=0$ for all $n \geq 0$, if $X$ is a finite aspherical CW-complex such that the fundamental group $\pi_1(X)$ possesses a subnormal series
\[
\pi_1(X) \rhd G_m \rhd \ldots \rhd G_0
\]
with a non-trivial amenable group $G_0$.

It would be interesting to know whether $S_{max}(G)^{-1} \cdot \zz = 0$ for all amenable groups $G$ of finite cohomological dimension. Notice that a positive answer would imply a generalization of Theorem \ref{rhomain} given by replacing ``elementary amenable'' with ``amenable''.

\end{document}